\documentclass[12pt,bezier]{article}
\usepackage{times}
\usepackage{booktabs}
\usepackage{pifont}
\usepackage{floatrow}
\hyphenpenalty=5000
\tolerance=1400
\floatsetup[table]{capposition=top}
\usepackage{caption}
\usepackage{mathrsfs}
\usepackage{amsmath}
\usepackage{amsfonts,amsthm,amssymb,mathrsfs,bbding}
\usepackage{txfonts}
\usepackage{graphics,multicol}
\usepackage{graphicx}
\usepackage{color}
\usepackage{amssymb}
\allowdisplaybreaks[3]
\usepackage{caption}
\captionsetup{%
  figurename=Figure,
  tablename=Tab.
}

\usepackage{cite}
\usepackage{latexsym,bm}
\usepackage{indentfirst}
\usepackage{color}
\usepackage[colorlinks=true,anchorcolor=blue,filecolor=blue,linkcolor=blue,urlcolor=blue,citecolor=blue]{hyperref}
\usepackage{extarrows}
\usepackage{cite}
\usepackage{latexsym,bm}
\usepackage{mathtools}
\usepackage{enumerate}

\pagestyle{myheadings} \markright{} \textwidth 150mm \textheight 235mm \oddsidemargin=1cm
\evensidemargin=\oddsidemargin\topmargin=-1.5cm

\newtheorem{thm}{Theorem}[section]

\newtheorem*{claim}{Claim}

\newtheorem{lem}{Lemma}[section]

\theoremstyle{definition}

\addtocounter{section}{0}

\begin{document}
\title{Toughness and distance spectral radius in graphs involving minimum degree\footnote{Supported by the National Natural Science Foundation of China
{(Nos. 12371361, 11971445 and 12171440)}.}}
\author{{\bf Jing Lou$^{a}$}, {\bf Ruifang Liu$^{a}$}\thanks{Corresponding author.
E-mail addresses: rfliu@zzu.edu.cn (R. Liu), loujing\_23@163.com (J. Lou), jlshu@admin.ecnu.edu.cn (J. Shu)},
{\bf Jinlong Shu$^{b}$}\\
{\footnotesize $^a$ School of Mathematics and Statistics, Zhengzhou University, Zhengzhou, Henan 450001, China} \\
{\footnotesize $^b$ School of Finance and Business, Shanghai Normal University, Shanghai 200233, China}}
\date{}

\date{}
\maketitle
{\flushleft\large\bf Abstract}
The {\it toughness} $\tau(G)=\mathrm{min}\{\frac{|S|}{c(G-S)}: S~\mbox{is a cut set of vertices in}~G\}$ for $G\ncong K_n.$ The concept of toughness initially proposed by Chv$\mathrm{\acute{a}}$tal in 1973, which serves as a simple way to measure how tightly various pieces of a graph hold together. A graph $G$ is called {\it $t$-tough} if $\tau(G)\geq t.$ It is very interesting to investigate the relations between toughness and eigenvalues of graphs. Fan, Lin and Lu [European J. Combin. 110 (2023) 103701] provided sufficient conditions in terms of the spectral radius for a graph to be 1-tough with minimum degree $\delta$ and $t$-tough with $t\geq 1$ being an integer, respectively. By using some typical distance spectral techniques and structural analysis, we in this paper present sufficient conditions based on the distance spectral radius to guarantee a graph to be 1-tough with minimum degree $\delta.$ Moreover, we also prove sufficient conditions with respect to the distance spectral radius for a graph to be $t$-tough, where $t$ or $\frac{1}{t}$ is a positive integer.

\begin{flushleft}
\textbf{Keywords:} Toughness, Distance spectral radius, Minimum degree

\end{flushleft}
\textbf{AMS Classification:} 05C50; 05C35

\section{Introduction}
All graphs considered in this paper are undirected and simple. Let $G$ be a graph with vertex set $V(G)$ and edge set $E(G).$ The {\it order} and {\it size} of $G$ are denoted by $|V(G)|=n$ and $|E(G)|=e(G)$, respectively. A graph with just one vertex is referred to as a trivial graph. We denote by $\delta(G)$ and $\Delta(G)$ the minimum degree and the maximum degree of $G,$ respectively. Let $c(G)$ be the number of components of a graph $G$. For a vertex subset $S$ of $G$, we denote by $G-S$ and $G[S]$ the subgraph of $G$ obtained from $G$ by deleting the vertices in $S$ together with their incident edges and the subgraph of $G$ induced by $S$, respectively. As usual, $K_n$ denotes the complete graph of order $n$. For two vertex-disjoint graphs $G_1$ and $G_2$, we denote by $G_{1}+G_{2}$ the {\it disjoint union} of $G_1$ and $G_2$. The {\it join} $G_1\vee G_2$ is the graph obtained from $G_{1}+G_{2}$ by adding all possible edges between $V(G_1)$ and $V(G_2)$. For undefined terms and notions, one can refer to \cite{Bondy2008}.

Let $G$ be a connected graph with vertex set $V(G)=\{v_1, v_2, \ldots, v_n\}$ and edge set $E(G)$. The {\it distance} between $v_i$ and $v_j,$ denote by $d_{ij}(G)$, is the length of a shortest path from $v_i$ to $v_j.$ The {\it distance matrix} of $G,$ denote by $D(G),$ is an $n\times n$ matrix with its rows and columns indexed by $V(G).$ For $i\neq j,$ the $(i,j)$-entry of $D(G)$ is equal to $d_{ij}(G).$ Also, $d_{ii}(G)=0.$ Clearly, $D(G)$ is a real symmetric matrix with zeros on the diagonal. In this paper, we always use $J$ to denote the all-one matrix, $I$ to denote the identity square matrix, and $O$ to denote the zero matrix. We can order the eigenvalues of $D(G)$ as $\lambda_1(D(G))\geq \lambda_2(D(G))\geq \cdots \geq \lambda_n(D(G)).$ By the Perron-Frobenius theorem, $\lambda_1(D(G))$ is always positive (unless $G$ is trivial) and $\lambda_1(D(G))\geq |\lambda_i(D(G))|$ for $i=2, 3, \ldots, n.$ We call $\lambda_1(D(G))$ the {\it distance spectral radius} of $G$. Furthermore, there exists a unique positive unit eigenvector $\mathbf{x}=(x_{1}, x_{2}, \ldots, x_{n})^T$ corresponding to $\lambda_1(D(G))$, which is called the {\it Perron vector} of $D(G).$

Recently, the researchers have paid attention to the problems of establishing relations between structural properties and the distance spectral radius of graphs. In 2021, Zhang and Lin\cite{Zhang2021} presented sufficient conditions in terms of the distance spectral radius to guarantee the existence of a perfect matching in graphs and bipartite graphs, respectively. Subsequently, Zhang, Lin, Liu and Zheng\cite{Zhang2022} generalized the result of \cite{Zhang2021}. Moreover, they also determined the extremal graph attaining the minimum distance spectral radius among all bipartite graphs with a unique perfect matching, and then proved a sufficient condition for the existence of two vertex-disjoint cycles in a bipartite graph with respect to the distance spectral radius. Li and Miao\cite{Li2022} established an upper bound on the distance spectral radius to ensure that a graph has an odd factor. Later, Li, Miao and Zhang\cite{Li2023} presented a sufficient condition based on the distance spectral radius to guarantee the existence of a fractional perfect matching in graphs. Around the same time, Miao and Li\cite{Miao2023} proved an upper bound on the distance spectral radius to ensure that a graph has a star factor. Very recently, Zhang and van Dam\cite{Zhang2023} proposed a sufficient condition based on the distance spectral radius to guarantee that a graph or a bipartite graph is $k$-extendable. Zhou and Wu\cite{Zhou2023} proved an upper bound in terms of the distance spectral radius to ensure the existence of a spanning $k$-tree in graphs.

In 1973, Chv$\mathrm{\acute{a}}$tal\cite{Chvatal1973} initially introduced the concept of toughness, which is regarded as a simple way to measure how tightly various pieces of a graph hold together. The {\it toughness} $\tau(G)=\mathrm{min}\{\frac{|S|}{c(G-S)}: S~\mbox{is a cut set of vertices in}~G\}$ for $G\ncong K_n.$ A graph $G$ is called {\it $t$-tough} if $\tau(G)\geq t.$
Note that $\delta\geq 2$ is a trivial necessary condition for a graph to be 1-tough. In the past few years, many researchers focused on finding sufficient conditions for a graph to be $t$-tough. Very recently, Fan, Lin and Lu\cite{Fan2023} proved a
sufficient condition in terms of the spectral radius for a graph to be 1-tough with minimum degree. Inspired by the work of Fan, Lin and Lu\cite{Fan2023}, we prove a sufficient condition based on the distance spectral radius $\lambda_1(D(G))$ to ensure that a graph $G$ is 1-tough with minimum degree $\delta$.

\begin{thm}\label{main1}
Let $G$ be a connected graph of order $n \geq \{8\delta, \frac{1}{2}\delta^2+2\delta+2\}$ with minimum degree $\delta \geq 2.$ If $$\lambda_1(D(G))\leq \lambda_1(D(K_\delta \vee (K_{n-2\delta}+\delta K_1))),$$
then $G$ is 1-tough unless $G \cong K_\delta \vee (K_{n-2\delta}+\delta K_1)$.
\end{thm}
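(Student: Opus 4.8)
The plan is to argue by contradiction: assume $G$ is not $1$-tough, so there is a cut set $S$ with $|S| < c(G-S)$, i.e. $|S| \le c(G-S) - 1$. Since $\delta \ge 2$, a standard argument shows $|S| \ge \delta$ (each component of $G-S$ has a vertex whose $\delta$ neighbors, apart from the at most $|S|-1$ outside... actually one uses that a smallest component $C$ satisfies $|C| \ge 1$ and every vertex of $C$ has its $\ge \delta$ neighbors inside $C \cup S$, forcing $|S| \ge \delta$ unless $|C|$ is large). Writing $|S| = s$ and $c = c(G-S) \ge s+1$, I want to reduce to a canonical ``worst'' graph. Among all connected graphs on $n$ vertices with minimum degree $\ge \delta$ that are not $1$-tough and contain $G$ as a spanning subgraph on the relevant vertex partition, the distance spectral radius is maximized (since adding edges strictly decreases all distances, hence strictly decreases $\lambda_1(D(\cdot))$ by the Perron–Frobenius monotonicity that follows from $\mathbf{x}^T D \mathbf{x}$ being strictly decreasing in each entry) by a graph of the form $K_s \vee (G_1 + G_2 + \cdots + G_c)$ where each $G_i$ is a clique. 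So it suffices to analyze $H = K_s \vee (K_{n_1} + \cdots + K_{n_c})$ with $c \ge s+1$, $\sum n_i = n - s$, $n_i \ge 1$, and each vertex in a trivial component $K_1$ forcing $s \ge \delta$ (else $\delta(H) < \delta$). Among such $H$, a second optimization over the partition $(n_1,\dots,n_c)$ and over $s$ is needed.

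The key structural step is to show that $\lambda_1(D(H))$ is maximized when $s$ is as small as possible, namely $s = \delta$, when $c = s+1 = \delta+1$, and when the clique sizes are as unbalanced as possible: one big clique $K_{n-2\delta}$ and $\delta$ singletons, i.e. $H \cong K_\delta \vee (K_{n-2\delta} + \delta K_1)$. For the clique-size balancing I expect to use the following mechanism: if two components $K_{n_i}$ and $K_{n_j}$ both have size $\ge 2$, then moving a vertex from the smaller to the larger (keeping them cliques, adjacent to all of $S$) increases the distance spectral radius; this is a routine ``vertex-shifting'' computation with the Perron vector, comparing $\mathbf{x}^T D(H')\mathbf{x} - \mathbf{x}^T D(H)\mathbf{x}$ and using that within-clique distances are $1$ while cross-clique distances are $2$. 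Merging surplus components into $S$ or into one clique (to drive $c$ down to $\delta+1$ and $s$ down to $\delta$) is handled similarly, using the order lower bounds $n \ge 8\delta$ and $n \ge \frac12\delta^2 + 2\delta + 2$ to guarantee the big clique stays dominant so the sign of each comparison is controlled.

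The main obstacle will be the final quantitative comparison: once everything is reduced to the two-parameter family $F_{s} := K_s \vee (K_{n-s-\delta+ \text{(something)}} + \text{singletons})$ with $\delta \le s$, I must show $\lambda_1(D(F_s))$ is strictly decreasing in $s$ on the relevant range and that among $s = \delta$ the extremal configuration is exactly $K_\delta \vee (K_{n-2\delta} + \delta K_1)$. For this I plan to set up the quotient matrix of $D(H)$ with respect to the equitable-type partition $\{S,\ \text{big clique},\ \text{singletons}\}$ (sizes $s$, $n-2\delta$, $\delta$ in the extremal case), write the characteristic polynomial of the $3\times 3$ quotient, and estimate $\lambda_1$ via the fact that it equals the largest quotient-matrix eigenvalue. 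Bounding this root and tracking its monotonicity in $s$ and in the partition is where the hypotheses on $n$ enter decisively, ensuring the cross terms of order $n$ dominate the lower-order error terms; I expect this to be the longest but most mechanical part. Finally, the ``unless'' clause follows because every inequality used in the reduction is strict except when $H$ already equals $K_\delta \vee (K_{n-2\delta} + \delta K_1)$, so equality $\lambda_1(D(G)) = \lambda_1(D(K_\delta \vee (K_{n-2\delta} + \delta K_1)))$ forces $G$ itself to be that graph.
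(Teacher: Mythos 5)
Your proposal follows the paper's general skeleton (contradiction, completion of $G$ to $K_s\vee(K_{n_1}+\cdots+K_{n_c})$, quotient matrices and characteristic polynomials for the final comparison), but it contains two genuine problems. First, the direction of every spectral comparison is backwards. Since adding edges strictly \emph{decreases} the distance spectral radius (as your own parenthetical notes), the completion $H=K_s\vee(K_{n_1}+\cdots+K_{n_c})$ satisfies $\lambda_1(D(H))\le\lambda_1(D(G))$, and to contradict the hypothesis $\lambda_1(D(G))\le\lambda_1(D(G^*))$ one must show that $G^*=K_\delta\vee(K_{n-2\delta}+\delta K_1)$ \emph{minimizes} $\lambda_1(D(\cdot))$ over the family of completions — not maximizes it. Likewise, unbalancing the clique sizes (moving a vertex to the largest clique) \emph{decreases} $\lambda_1(D(\cdot))$ (this is Zhang--Lin's lemma, Lemma \ref{le4}), and $\lambda_1(D(K_s\vee(K_{n-2s}+sK_1)))$ must be shown to be \emph{increasing} in $s$ for $s\ge\delta$. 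As written, your chain of inequalities all point the same way as $\lambda_1(D(H))\le\lambda_1(D(G))$ and yield no contradiction; you appear to be importing the adjacency-spectral-radius framework of Fan--Lin--Lu, where completion maximizes the relevant eigenvalue.

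Second, and more seriously, the claim that $|S|\ge\delta$ is false, and with it the entire case $1\le s<\delta$ disappears from your argument. A graph such as $K_1\vee(K_{n-\delta-1}+K_\delta)$ has minimum degree $\delta$, is not $1$-tough, and has $s=1$: when $s<\delta$ the components of $G-S$ are not singletons but cliques of order at least $\delta-s+1$, so the degree condition constrains the $n_i$ rather than $s$. Handling this regime is the longest part of the paper's proof (its Case 3): it requires showing $n_1\ge 2(\delta-s+1)$ (this is precisely where the hypothesis $n\ge\frac12\delta^2+2\delta+2$ enters), proving a new balancing lemma (Lemma \ref{le6}, a generalization of Lemma \ref{le4} to components of size at least $p$) to reduce to $K_s\vee(K_{n-s-(\delta-s+1)s}+sK_{\delta-s+1})$, and then a separate polynomial comparison against $G^*$, split further into $s=1$ and $2\le s<\delta$. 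Your proposal covers only the configurations with singleton components (effectively the paper's Cases 1 and 2), so the hardest third of the proof is missing.
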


In the same paper, Fan, Lin and Lu\cite{Fan2023} also proposed a sufficient condition based on the spectral radius to ensure that a graph is $t$-tough. Motivated by their result, we consider the sufficient condition of $t$-tough graphs from the distance spectral radius perspective.

\begin{thm}\label{main2}
Let $G$ be a connected graph of order $n$. Each of the following holds.\\
(i) Let $t$ be a positive integer and $n\geq 4t^2+10t$. If $\lambda_1(D(G))\leq \lambda_1(D(K_{2t-1}\vee(K_{n-2t}+K_1))),$ then $G$ is $t$-tough unless $G\cong K_{2t-1}\vee(K_{n-2t}+K_1)$.\\
(ii) Let $\frac{1}{t}$ be a positive integer and $n\geq 2t+\frac{9}{2 t}+\frac{9}{2}$. If $\lambda_1(D(G))\leq \lambda_1(D(K_{1}\vee(K_{n-1-\frac{1}{t}}+\frac{1}{t}K_1))),$ then $G$ is $t$-tough unless $G\cong K_{1}\vee(K_{n-1-\frac{1}{t}}+\frac{1}{t}K_1)$.
\end{thm}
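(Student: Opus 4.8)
The plan is to argue by contraposition. Fix one of the two parts and let $H$ denote the graph named there ($H=K_{2t-1}\vee(K_{n-2t}+K_1)$ in part (i), $H=K_1\vee(K_{n-1-1/t}+\tfrac1t K_1)$ in part (ii)); assume $G$ is connected of order $n$ satisfying the stated lower bound, $G$ is \emph{not} $t$-tough, and $G\ncong H$, and aim to prove $\lambda_1(D(G))>\lambda_1(D(H))$. Since $G$ is not $t$-tough there is a cut set $S$ with $s:=|S|\ge 1$, $c:=c(G-S)\ge 2$ and $s<tc$. If $n_1\ge\cdots\ge n_c$ are the orders of the components of $G-S$, then $G$ is a spanning subgraph of $\widehat G:=K_s\vee(K_{n_1}+\cdots+K_{n_c})$, which is still non-$t$-tough via the same cut and has diameter $2$. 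Because adding an edge to a connected graph strictly decreases a positive entry of the (irreducible) distance matrix without increasing any entry, $\lambda_1(D(G))\ge\lambda_1(D(\widehat G))$, with equality only if $G=\widehat G$.

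Next I would, for fixed $s$ and $c$, show that $\lambda_1(D(K_s\vee(K_{n_1}+\cdots+K_{n_c})))$ is minimized uniquely by $G^\ast:=K_s\vee(K_{n-s-c+1}+(c-1)K_1)$. Let $\mathbf x$ (with $\|\mathbf x\|=1$) be the Perron vector of $D(G^\ast)$; by the equitable partition into $S$, the big clique $B$, and the set $W$ of isolated vertices, $\mathbf x$ has constant values $\sigma,\alpha,\gamma$ on these cells. For any diameter-$2$ graph $H'$ of order $n$, $D(H')=2(J-I)-A(H')$, so the Rayleigh quotient gives $\lambda_1(D(\widehat G))-\lambda_1(D(G^\ast))\ge\mathbf x^{\top}(A(G^\ast)-A(\widehat G))\mathbf x$; identifying the two natural vertex labelings and counting the added and deleted edges turns the right side into $2\alpha(N_+\alpha-N_-\gamma)$, where $N_-=\sum_{i\ge 2}(n_i-1)$ and $N_+\ge n_1N_-$. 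From the two eigen-equations for $\mathbf x$ one extracts the identity $\gamma/\alpha=(\lambda_1(D(G^\ast))+|B|+1)/(\lambda_1(D(G^\ast))+2)$, and since $\lambda_1(D(G^\ast))\ge n-1$ and $|B|\le n-2$ this yields $\gamma<2\alpha$; hence $N_+\alpha-N_-\gamma>\alpha N_-(n_1-2)\ge 0$ whenever $\widehat G\ne G^\ast$. Combining this with the elementary fact that moving a vertex of the big clique into $S$ produces a spanning supergraph (so $\lambda_1(D)$ strictly drops), I may replace $\widehat G$ by $F(c):=K_{s_c}\vee(K_{n-s_c-c+1}+(c-1)K_1)$ with $s_c=\lceil tc\rceil-1$ the largest integer below $tc$; all intermediate graphs stay non-$t$-tough.

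It then remains to minimize $\lambda_1(D(F(c)))$ over the finitely many admissible values of $c$. I would use the $3\times 3$ equitable quotient matrix $B_c$ of $D(F(c))$ relative to the partition into $S$, the big clique, and the isolated vertices, so that $\lambda_1(D(F(c)))=\lambda_1(B_c)$, and then show that over this range $\lambda_1(B_c)$ attains its minimum exactly at the value $c^\ast$ for which $F(c^\ast)=H$ — namely $c^\ast=2$ in part (i) and $c^\ast=1+\tfrac1t$ in part (ii) — by checking that $\det(\lambda_1(B_{c^\ast})I-B_c)<0$ for every admissible $c\ne c^\ast$, which forces the Perron root of $B_c$ to exceed $\lambda_1(B_{c^\ast})$. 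Chaining the inequalities, $\lambda_1(D(G))\ge\lambda_1(D(\widehat G))\ge\lambda_1(D(F(c)))\ge\lambda_1(D(H))$, and equality anywhere forces $c=c^\ast$, all the intermediate moves to be trivial, hence $G=H$. This contradiction proves the contrapositive of both parts.

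The main obstacle is the comparison in the last step: bounding $\det(\lambda_1(B_{c^\ast})I-B_c)$ uniformly in $c$ is an elementary but delicate polynomial computation, and it is precisely there that the hypotheses $n\ge 4t^2+10t$ (resp.\ $n\ge 2t+\tfrac{9}{2t}+\tfrac92$) and the integrality of $t$ (resp.\ of $1/t$) are consumed. A further complication peculiar to part (ii) is that $s_c=\lceil tc\rceil-1$ grows with $c$, so $F(c)$ changes shape: the argument then splits into the sub-family with $s_c=1$, where the minimum of $\lambda_1(B_c)$ occurs at the configuration with the fewest isolated vertices, i.e.\ at $c=1+\tfrac1t$, and the sub-families with $s_c\ge 2$, which are dispatched by a cruder estimate. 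Everything in the earlier steps and the final chaining is routine once the shifting step (the $\gamma<2\alpha$ estimate and its consequence) is in hand.
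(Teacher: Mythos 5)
Your skeleton --- argue the contrapositive, pass to the spanning supergraph $K_s\vee(K_{n_1}+\cdots+K_{n_c})$, collapse to the corner configuration with $c-1$ singleton components via a Rayleigh-quotient argument using the Perron vector of the target, then minimize over a one-parameter family by means of $3\times 3$ equitable quotient matrices --- is exactly the paper's architecture; your Rayleigh step is precisely Lemma \ref{le4} of Zhang--Lin (which the paper simply cites), and your ratio identity $\gamma/\alpha=(\lambda_1+|B|+1)/(\lambda_1+2)$ together with $\gamma<2\alpha$ matches the computation in Lemma \ref{le6} with $p=1$. The substantive divergence is in the last stage. For part (i) the paper never compares characteristic polynomials across different $c$: for $c\geq 3$ it brackets the two spectral radii around the single value $n+2$, using $\lambda_1(D(G''))\geq 2W(G'')/n\geq n+2$ (Lemma \ref{le3}) on one side and $\lambda_1(D(K_{2t-1}\vee(K_{n-2t}+K_1)))\leq\sqrt{2(n-1)W^{(2)}/n}<n+2$ (Lemma \ref{le7}) on the other; this replaces your uniform-in-$c$ determinant estimate by two short quadratic verifications, and it is there that $n\geq 4t^2+10t$ is consumed. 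For part (ii) the paper parametrizes the corner family by $s$ with $c=\frac{s}{t}+1$ minimal (rather than by $c$ with $s$ maximal), which keeps $\frac{s}{t}$ an integer and yields the single family $K_s\vee(K_{n-s-\frac{s}{t}}+\frac{s}{t}K_1)$, avoiding your ceiling function and the case split over the plateaus of $s_c$; the comparison then reduces to one polynomial inequality $\psi(x)>0$ on $[\,n-\frac{1}{t}-1,\infty)$.

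Two cautions before this can count as a proof. First, the decisive inequality of your plan --- $\det(\lambda_1(B_{c^\ast})I-B_c)<0$ for every admissible $c\neq c^\ast$ --- is exactly where the hypotheses on $n$ and the integrality of $t$ (or $1/t$) live, and you have not carried it out; the surrounding logic (a monic cubic negative at $\lambda$ forces its largest root to exceed $\lambda$) is sound, but until the estimate is done the argument is a plan rather than a proof, and for part (i) the paper's Wiener-index bracketing is a markedly lighter way to finish. Second, your replacement of $\widehat G$ by $F(c)=K_{s_c}\vee(K_{n-s_c-c+1}+(c-1)K_1)$ with $s_c=\lceil tc\rceil-1$ tacitly assumes $n\geq s_c+c$, i.e.\ that enough vertices remain outside the cut set to be absorbed into it while preserving $c$ components; when $c$ is large relative to $n$ (for instance $G$ a spanning subgraph of $K_1\vee(n-1)K_1$ with $t\geq 2$) this fails and the minimizer degenerates to a complete split graph $K_{n-c}\vee cK_1$, which needs its own (easy) estimate. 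The paper shares this soft spot in part (i), where it implicitly restricts to $c\leq\frac{n+1}{t+1}$, so you are not behind it here, but an executed version of your argument should close that case explicitly.
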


\section{Preliminary lemmas}
In this section, we put forward some necessary lemmas, which will be used to prove our main results. We first present a preliminary result about the relationship between the distance spectral radius of a graph and its spanning graph, which is a corollary of the Perron-Frobenius theorem.

\begin{lem}[Godsil \cite{Godsil1993}]\label{le1}
Let $e$ be an edge of a graph $G$ such that $G-e$ is connected. Then $$\lambda_1(D(G))<\lambda_1(D(G-e)).$$
\end{lem}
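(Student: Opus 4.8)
The plan is to exploit the elementary fact that deleting an edge can only push vertices farther apart, so that the distance matrix of $G-e$ dominates that of $G$ entrywise, and then to upgrade this weak domination into a \emph{strict} inequality of spectral radii using Perron--Frobenius theory; this is precisely why the lemma is described as a corollary of the Perron--Frobenius theorem.

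First I would record that $G$ itself is connected: since $G-e$ is connected and is a spanning subgraph of $G$, adding the edge $e$ back keeps the graph connected. Hence $D(G)$ and $D(G-e)$ are both genuine distance matrices on the same vertex set $V(G)=\{v_1,\dots,v_n\}$ — each symmetric, nonnegative, and (since both graphs are connected) irreducible — and $D(G)$ admits a positive unit Perron vector $\mathbf{x}=(x_1,\dots,x_n)^T$, as recalled in the introduction.

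Next I would compare the two matrices entrywise. For any pair of vertices $v_a,v_b$, a shortest $v_a$--$v_b$ path in $G-e$ is in particular a $v_a$--$v_b$ walk in $G$, so $d_{ab}(G)\le d_{ab}(G-e)$; thus $D(G-e)-D(G)\ge O$ entrywise. Writing $e=v_iv_j$, the endpoints $v_i$ and $v_j$ are nonadjacent in $G-e$, so $d_{ij}(G-e)\ge 2>1=d_{ij}(G)$, which shows the difference $D(G-e)-D(G)$ has a strictly positive $(i,j)$-entry, and by symmetry a strictly positive $(j,i)$-entry.

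Finally I would convert this into the desired strict inequality via the Rayleigh quotient. Using the variational characterization of the largest eigenvalue of a symmetric matrix together with $\|\mathbf{x}\|=1$,
$$\lambda_1(D(G-e))\ \ge\ \mathbf{x}^T D(G-e)\,\mathbf{x}\ =\ \mathbf{x}^T D(G)\,\mathbf{x}+\mathbf{x}^T\bigl(D(G-e)-D(G)\bigr)\mathbf{x}\ \ge\ \lambda_1(D(G))+2x_ix_j,$$
and since $\mathbf{x}$ is positive, $2x_ix_j>0$, which gives $\lambda_1(D(G-e))>\lambda_1(D(G))$. (Equivalently, one may invoke directly the strict monotonicity of the spectral radius of a nonnegative irreducible matrix under a nonzero nonnegative perturbation, using that $D(G-e)$ is irreducible because $G-e$ is connected.) I do not expect any real obstacle here: the only two points requiring care are the positivity of the Perron vector of $D(G)$ — which is exactly the Perron--Frobenius statement quoted for connected graphs — and the fact that at least one distance strictly increases, which is immediate from the nonadjacency of the endpoints of $e$ in $G-e$.
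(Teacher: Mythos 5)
Your proof is correct and complete: the entrywise domination $D(G)\le D(G-e)$ with strict increase at the $(i,j)$-entry, combined with the Rayleigh quotient evaluated at the positive Perron vector of $D(G)$, is exactly the standard Perron--Frobenius argument. The paper does not prove Lemma \ref{le1} at all (it only cites Godsil and remarks that the result is a corollary of the Perron--Frobenius theorem), so your write-up simply supplies the omitted proof along the intended lines; no gaps.
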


Next we introduce the concepts of equitable partitions and quotient matrices, and then give a well-known result. Let $M$ be a real $n\times n$ matrix. Assume that $M$ can be written as the following matrix
\[
M=\left(\begin{array}{ccccccc}
M_{1,1}&M_{1,2}&\cdots &M_{1,m}\\
M_{2,1}&M_{2,2}&\cdots &M_{2,m}\\
\vdots& \vdots& \ddots& \vdots\\
M_{m,1}&M_{m,2}&\cdots &M_{m,m}\\
\end{array}\right),
\]
whose rows and columns are partitioned into subsets $X_{1}, X_{2},\ldots ,X_{m}$ of $\{1,2,\ldots, n\}$. The quotient matrix $R(M)$ of the matrix $M$ (with respect to the given partition) is the $m\times m$ matrix whose entries are the average row sums of the blocks $M_{i,j}$ of $M$. The above partition is called {\it equitable} if each block $M_{i,j}$ of $M$ has constant row (and column) sum.

\begin{lem}[Brouwer and Haemers \cite{Brouwer2011}, Godsil and Royle \cite{Godsil2001}, Haemers \cite{Haemers1995}]\label{le2}
Let $M$ be a real symmetric matrix and let $R(M)$ be its equitable quotient matrix.
Then the eigenvalues of the quotient matrix $R(M)$ are eigenvalues of $M$.
Furthermore, if $M$ is nonnegative and irreducible, then the spectral radius of the quotient matrix $R(M)$ equals to the spectral radius of $M$.
\end{lem}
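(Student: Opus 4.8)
The plan is to prove both assertions through the characteristic (indicator) matrix of the partition and the identity it induces with $M$. Let $X_{1},\ldots,X_{m}$ be the parts, and let $P$ be the $n\times m$ matrix whose $j$-th column is the characteristic vector of $X_{j}$, so that $P_{v,j}=1$ when $v\in X_{j}$ and $P_{v,j}=0$ otherwise. Since the parts are nonempty and pairwise disjoint, $P$ has full column rank $m$, and since each row of $P$ contains exactly one $1$, the all-ones vectors satisfy $P\mathbf{1}_{m}=\mathbf{1}_{n}$. The first step is to record the key identity $MP=PR(M)$: the $(v,j)$-entry of $MP$ equals $\sum_{w\in X_{j}}M_{vw}$, which is exactly the row sum of the block $M_{i,j}$ for the index $i$ with $v\in X_{i}$, and by equitability this constant row sum is $R(M)_{ij}$, which is also the $(v,j)$-entry of $PR(M)$.

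For the first assertion, I would take any eigenpair $(\mu,y)$ of $R(M)$, so $R(M)y=\mu y$ with $y\neq 0$, and set $z=Py$. The identity gives $Mz=MPy=PR(M)y=\mu Py=\mu z$, and $z\neq 0$ because $P$ has full column rank. Hence $\mu$ is an eigenvalue of $M$, which proves that every eigenvalue of $R(M)$ is an eigenvalue of $M$.

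For the second assertion I would exploit the symmetry of $M$. The identity $MP=PR(M)$ shows the column space $\mathcal{C}=\mathrm{col}(P)$ is $M$-invariant; since $M$ is symmetric, the orthogonal complement $\mathcal{C}^{\perp}$ is $M$-invariant as well. Because $M$ is nonnegative and irreducible, the Perron--Frobenius theorem supplies a positive eigenvector $x$ for the spectral radius $\rho(M)$, which is moreover a simple eigenvalue. Decomposing $x=u+w$ with $u\in\mathcal{C}$ and $w\in\mathcal{C}^{\perp}$, invariance forces $Mw=\rho(M)w$; if $w\neq 0$ then $w$ spans the same one-dimensional eigenspace as $x$, so $w=cx$ for some scalar $c$, but then $\mathbf{1}_{n}^{\top}w=c\,\mathbf{1}_{n}^{\top}x$ with $\mathbf{1}_{n}^{\top}x>0$ contradicts $w\perp\mathbf{1}_{n}=P\mathbf{1}_{m}\in\mathcal{C}$ unless $c=0$. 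Thus $w=0$ and $x=Py$ for some $y\neq 0$; substituting into $MP=PR(M)$ and cancelling $P$ via its full column rank yields $R(M)y=\rho(M)y$, so $\rho(M)$ is an eigenvalue of $R(M)$. Combined with the first assertion, every eigenvalue of $R(M)$ is an eigenvalue of $M$ and hence at most $\rho(M)$ in modulus, which gives $\rho(R(M))\leq\rho(M)$; together with $\rho(R(M))\geq\rho(M)$ this forces $\rho(R(M))=\rho(M)$.

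The main obstacle is the second assertion: one must show that the Perron vector of $M$ lies in $\mathcal{C}$, i.e. is constant on the parts. This is precisely where symmetry is essential, since it is what makes $\mathcal{C}^{\perp}$ invariant and thereby lets the simplicity of the Perron eigenvalue annihilate the component of $x$ off $\mathcal{C}$. Without symmetry one retains only the inclusion of spectra from the first assertion, not the equality of spectral radii.
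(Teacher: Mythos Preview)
Your argument is correct and is essentially the standard proof found in the cited references. The identity $MP=PR(M)$ for the characteristic matrix $P$ of an equitable partition is exactly the engine behind both claims, and your use of the $M$-invariance of $\mathrm{col}(P)$ together with the symmetry of $M$ to force the Perron vector into $\mathrm{col}(P)$ is the right idea. One small point worth making explicit: from $Mx=\rho(M)x$ and the decomposition $x=u+w$, invariance of $\mathcal{C}$ and $\mathcal{C}^{\perp}$ gives $Mu+Mw=\rho(M)u+\rho(M)w$ with $Mu\in\mathcal{C}$, $Mw\in\mathcal{C}^{\perp}$, and it is the uniqueness of the orthogonal decomposition that yields $Mu=\rho(M)u$ and $Mw=\rho(M)w$ separately; you jump straight to $Mw=\rho(M)w$, which is fine but leaves this step implicit.

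As for comparison: the paper does not give its own proof of this lemma. It is quoted from Brouwer--Haemers, Godsil--Royle, and Haemers and used as a black box, so there is nothing in the paper to compare against. Your write-up matches the textbook treatment in those sources.
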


Let $W(G)=\sum_{i<j}d_{ij}(G)$ be the {\it Wiener index} of a connected graph $G$ of order $n$. The next result is useful, which can be easily obtained by the Rayleigh quotient\cite{Horn1985}.

\begin{lem}\label{le3}
Let $G$ be a connected graph with order $n$. Then
\begin{equation*}
\lambda_1(D(G))=\mathop{\max}_{\mathbf{x}\neq\mathbf{0}}\frac{\mathbf{x}^TD(G)\mathbf{x}}{\mathbf{x}^T\mathbf{x}}\geq \frac{\mathbf{1}^TD(G)\mathbf{1}}{\mathbf{1}^T\mathbf{1}}=\frac{2W(G)}{n},
\end{equation*}
where $\mathbf{1}=(1, 1, \ldots, 1)^T.$
\end{lem}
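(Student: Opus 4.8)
The plan is to read this off directly from the variational (Rayleigh--Ritz) characterization of the largest eigenvalue of a real symmetric matrix, since $D(G)$ is precisely such a matrix. First I would invoke the standard fact that for any real symmetric $n\times n$ matrix $M$ the largest eigenvalue satisfies $\lambda_1(M)=\max_{\mathbf{x}\neq\mathbf{0}}\frac{\mathbf{x}^TM\mathbf{x}}{\mathbf{x}^T\mathbf{x}}$, with the maximum attained at a corresponding eigenvector (here the Perron vector, by the Perron--Frobenius theorem applied to the nonnegative irreducible matrix $D(G)$ of a connected graph). This is exactly the Rayleigh quotient result cited from \cite{Horn1985}, and applying it to $M=D(G)$ gives the first equality in the statement at once.

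For the displayed inequality, I would simply specialize the test vector to $\mathbf{x}=\mathbf{1}=(1,\ldots,1)^T$. Because the maximum over all nonzero vectors dominates the value of the Rayleigh quotient at any particular choice, we obtain
$$\lambda_1(D(G))\geq\frac{\mathbf{1}^TD(G)\mathbf{1}}{\mathbf{1}^T\mathbf{1}}$$
with no further work.

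It then remains to evaluate the two scalars. Clearly $\mathbf{1}^T\mathbf{1}=n$. For the numerator, $\mathbf{1}^TD(G)\mathbf{1}$ is the sum of all $n^2$ entries of $D(G)$; the only point requiring a moment's care is the bookkeeping of this double sum. Since $D(G)$ is symmetric with zero diagonal (that is, $d_{ii}(G)=0$ and $d_{ij}(G)=d_{ji}(G)$), each off-diagonal distance $d_{ij}(G)$ with $i<j$ is counted exactly twice and the diagonal contributes nothing, so $\mathbf{1}^TD(G)\mathbf{1}=2\sum_{i<j}d_{ij}(G)=2W(G)$ by the definition of the Wiener index. Substituting these two evaluations yields $\frac{\mathbf{1}^TD(G)\mathbf{1}}{\mathbf{1}^T\mathbf{1}}=\frac{2W(G)}{n}$, completing the chain. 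There is no genuine obstacle here: the result is an immediate consequence of the Rayleigh principle together with this elementary entry count, which is why I would keep the argument to these three short steps.
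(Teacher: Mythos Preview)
Your argument is correct and is exactly the approach the paper intends: the paper does not spell out a proof but simply remarks that the lemma ``can be easily obtained by the Rayleigh quotient'' with a reference to \cite{Horn1985}, which is precisely the variational characterization plus the test vector $\mathbf{1}$ that you describe. Your additional bookkeeping (that $\mathbf{1}^TD(G)\mathbf{1}=2W(G)$ because $D(G)$ is symmetric with zero diagonal) just makes explicit what the paper leaves to the reader.
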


Finally, we present an important lemma, which will play an essential role in the proofs of Theorems \ref{main1} and \ref{main2}.

\begin{lem}[Zhang and Lin\cite{Zhang2021}]\label{le4}
Let $n, c, s$ and $n_i~(1\leq i\leq c)$ be positive integers with $n_1\geq n_2\geq \cdots \geq n_c\geq 1$ and $n_1+n_2+\cdots +n_c=n-s.$ Then $$\lambda_1(D(K_s\vee(K_{n_1}+K_{n_2}+\cdots+K_{n_c})))\geq \lambda_1(D(K_s\vee(K_{n-s-(c-1)}+(c-1)K_1)))$$
with equality if and only if $(n_1, n_2, \ldots, n_c)=(n-s-(c-1), 1, \ldots, 1).$
\end{lem}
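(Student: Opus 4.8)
The plan is to reduce the general configuration $(n_1,\dots,n_c)$ to the extremal one $(n-s-(c-1),1,\dots,1)$ by a finite sequence of local moves, each of which strictly decreases the distance spectral radius unless we are already at the extremal profile. Concretely, suppose $n_2\ge 2$ (so the tuple is not yet extremal). I would compare $G = K_s\vee(K_{n_1}+K_{n_2}+\cdots+K_{n_c})$ with $G' = K_s\vee(K_{n_1+1}+K_{n_2-1}+K_{n_3}+\cdots+K_{n_c})$, i.e. move one vertex $u$ from the second clique into the first clique. The key point is that in both graphs the only pairwise distances that are not $1$ are those between vertices lying in different cliques of the ``$+$'' part, and all such distances equal $2$ (since $K_s$ is joined to everything, every such pair has a common neighbour in $K_s$). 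So passing from $G$ to $G'$ changes exactly the entries of the distance matrix in the row/column of $u$: for each of the $n_1$ vertices of $K_{n_1}$, the distance to $u$ drops from $2$ to $1$; for each of the $n_2-1$ remaining vertices of $K_{n_2}$, it rises from $1$ to $2$. Since $n_1\ge n_2 > n_2-1$, there are strictly more decreased entries than increased ones, and all other entries are unchanged.

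To turn this entrywise comparison into a strict inequality between spectral radii, I would use the Perron vector. Let $\mathbf{x}$ be the Perron vector of $D(G')$, normalized so $\mathbf{x}^T\mathbf{x}=1$, and use Lemma \ref{le3} (the Rayleigh quotient characterization): $\lambda_1(D(G)) \ge \mathbf{x}^T D(G)\mathbf{x}$. Then
\[
\lambda_1(D(G)) - \lambda_1(D(G')) \ge \mathbf{x}^T\big(D(G)-D(G')\big)\mathbf{x}
= 2\,x_u\Big(\sum_{v\in K_{n_2}\setminus\{u\}} x_v \;-\; \sum_{v\in K_{n_1}} x_v\Big).
\]
By symmetry of $G'$, the Perron entries are constant on each clique of the ``$+$'' part; call these values $a$ on $K_{n_1+1}$ (which now contains $u$, so $x_u=a$) and $b$ on $K_{n_2-1}$. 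One checks from the eigenvalue equation $D(G')\mathbf{x}=\lambda_1\mathbf{x}$, applied at a vertex of $K_{n_1+1}$ versus a vertex of $K_{n_2-1}$, that $a > b$ whenever $n_1+1 > n_2-1$, which holds since $n_1\ge n_2$. Hence the bracket above is $(n_2-1)b - n_1 a \le (n_2-1)b - (n_2-1)a < 0$ (using $n_1\ge n_2-1$ and $a>b$), so $\lambda_1(D(G)) > \lambda_1(D(G'))$. Iterating this move until the second through $c$-th cliques are all singletons yields the claimed inequality, and the strictness at each step gives the characterization of equality: equality holds in the lemma precisely when no such move is available, i.e. when $(n_1,n_2,\dots,n_c)=(n-s-(c-1),1,\dots,1)$.

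The main obstacle is establishing the inequality $a>b$ between the two Perron-entry values rigorously from the eigenvalue equations; this requires writing out the two scalar equations for $D(G')\mathbf{x}=\lambda_1\mathbf{x}$ at a vertex of $K_{n_1+1}$ and at a vertex of $K_{n_2-1}$, subtracting, and solving for $a-b$ in terms of $\lambda_1$ and the clique sizes, then checking the sign using $\lambda_1 > 0$ and $n_1+1 > n_2-1$. A secondary technical point is handling the move when $n_1 = n_2$ (so $K_{n_1}$ and $K_{n_2}$ are interchangeable): there the bracket is $(n_2-1)b - n_1 a = (n_1-1)b - n_1 a < 0$ still holds strictly since $a>b$ and $n_1\ge 1$ (with $n_1\ge n_2\ge 2$ forcing $n_1\ge 2$), so strictness is maintained. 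One should also note that $D(G)-D(G')$ as written is the difference of matrices of the same order, with the identification of vertex sets being the obvious one, so the quadratic form manipulation is legitimate; alternatively one may invoke Lemma \ref{le2} with the equitable partition into cliques of $K_s$ and of the ``$+$'' part to reduce both spectral radii to spectral radii of explicit small quotient matrices and compare those directly, which sidesteps the Perron-vector argument at the cost of a determinant computation.
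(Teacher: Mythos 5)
First, a bookkeeping point: the paper does not prove Lemma \ref{le4} at all --- it is quoted from Zhang and Lin \cite{Zhang2021} --- so the only in-paper comparison is with the proof of its generalization, Lemma \ref{le6}, which performs a single global Rayleigh-quotient comparison against the extremal graph rather than your sequence of local moves. Your shifting strategy is a legitimate alternative route and the reduction to the single inequality $n_1a>(n_2-1)b$ is correct, but the proposal contains a genuine error at exactly that point. You claim that the Perron entry $a$ on the large clique $K_{n_1+1}$ exceeds the entry $b$ on the small clique $K_{n_2-1}$. For the \emph{distance} matrix this is backwards: subtracting the eigenvalue equations at a vertex of $K_{n_1+1}$ and at a vertex of $K_{n_2-1}$ gives $(\lambda+n_1+2)\,a=(\lambda+n_2)\,b$, hence $a<b$ whenever $n_1+1>n_2-1$. (Intuitively, vertices in the larger clique are at distance $1$ from more vertices, so their distance-row-sums, and hence their Perron entries, are smaller; this is consistent with equation (\ref{eq1}) in the paper's proof of Lemma \ref{le6}, where the entry $x_3$ on the small cliques satisfies $x_3\ge x_2$.) Consequently your key step ``$(n_2-1)b-n_1a\le (n_2-1)(b-a)<0$ using $a>b$'' collapses. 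There is also a sign slip in the displayed quadratic form: since the distance from $u$ to $K_{n_1}$ \emph{drops} and the distance to $K_{n_2}\setminus\{u\}$ \emph{rises} in passing from $G$ to $G'$, one has
\[
\mathbf{x}^T\bigl(D(G)-D(G')\bigr)\mathbf{x}=2x_u\Bigl(\sum_{v\in K_{n_1}}x_v-\sum_{v\in K_{n_2}\setminus\{u\}}x_v\Bigr),
\]
the negative of what you wrote; as stated, your chain bounds $\lambda_1(D(G))-\lambda_1(D(G'))$ from below by a negative number, which proves nothing.

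The argument is salvageable, but not by the monotonicity claim $a>b$. With the corrected sign you need $n_1a-(n_2-1)b>0$, i.e. $a/b>(n_2-1)/n_1$; substituting $a/b=(\lambda+n_2)/(\lambda+n_1+2)$ this reduces to $(n_1-n_2+1)\lambda>2n_2-n_1-2$, which holds because $n_1\ge n_2$ and $\lambda=\lambda_1(D(G'))> n-s-1\ge n_1+n_2-1>n_2-2$. This quantitative use of the ratio of Perron entries together with a lower bound on $\lambda_1$ is precisely the mechanism the paper uses in its proof of Lemma \ref{le6} (via (\ref{eq1}) and $\lambda_1(D(G_1))>n-(c-1)p-1$), so you should import that step rather than the false inequality $a>b$. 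The remaining points of your write-up (diameter-$2$ structure, identification of vertex sets, iteration of the move and the equality characterization) are fine once this is repaired.
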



\section{Proof of Theorem \ref{main1}}

In this section, we give the proof of Theorem \ref{main1}. Before doing this, we need the following critical lemma.

\begin{lem}\label{le6}
Let $n, c, s, p$ and $n_i~(1\leq i\leq c)$ be positive integers with $n_1\geq 2p$, $n_1\geq n_2\geq \cdots \geq n_c\geq p$ and $n_1+n_2+\cdots +n_c=n-s.$
Then $$\lambda_1(D(K_s\vee(K_{n_1}+K_{n_2}+\cdots+K_{n_c})))\geq \lambda_1(D(K_s\vee(K_{n-s-p(c-1)}+(c-1)K_p)))$$
with equality if and only if $(n_1, n_2, \ldots, n_c)=(n-s-p(c-1), p, \ldots, p).$
\end{lem}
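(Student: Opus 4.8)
The plan is to reduce the general claim to the already-established Lemma \ref{le4} by a single structural move: split off a copy of $K_p$ from each clique. Concretely, I would first observe that for each $i$ with $1 \le i \le c$ we may write $n_i = p + m_i$ with $m_i \ge 0$, and $m_1 \ge 2p - p = p \ge 1$ since $n_1 \ge 2p$. The idea is to compare $G_0 := K_s \vee (K_{n_1} + \cdots + K_{n_c})$ with the graph $H := K_{s + p(c-1)} \vee (K_{m_1 + p} + K_{m_2 + 1} + \cdots)$—but that does not quite line up, so instead I would argue directly on the Perron vector. Let $\mathbf{x}$ be the Perron vector of $D(G_0)$ and $\lambda = \lambda_1(D(G_0))$. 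Because $D(G_0)$ has the block structure dictated by the clique sizes, $\mathbf{x}$ is constant on the vertex set of $K_s$ and constant on the vertex set of each $K_{n_i}$; call these values $y_0$ and $y_i$ respectively. The diameter of $G_0$ is $2$ (any two vertices in different cliques are joined through $K_s$), so all within-clique distances are $1$ and all cross-clique distances are $2$.

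The key step is a \textbf{rearrangement/transfer argument} showing that if some $n_j \ge p+1$ with $j \ge 2$, then moving one vertex from clique $j$ into clique $1$ (i.e. replacing $(n_1, n_j)$ by $(n_1+1, n_j-1)$) strictly decreases the distance spectral radius. To see this, note that the only distances that change under this move are: the distance between the transferred vertex $v$ and the other $n_j - 1$ vertices of clique $j$ (from $1$ to $2$), and the distance between $v$ and the $n_1$ vertices of clique $1$ (from $2$ to $1$). Using the test vector $\mathbf{x}$ (the Perron vector of the original graph) in the Rayleigh quotient (Lemma \ref{le3}) for the new distance matrix $D'$, the change $\mathbf{x}^T D' \mathbf{x} - \mathbf{x}^T D \mathbf{x}$ equals $2 x_v\bigl(n_1 y_1 \cdot(1-2) + (n_j-1) y_j\cdot(2-1)\bigr) \cdot(\text{sign bookkeeping})$, i.e. it is controlled by the sign of $(n_j - 1) y_j - n_1 y_1$. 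Since $n_1 > n_j$ forces $y_1 \le y_j$ (larger cliques carry no larger Perron weight per vertex—this follows from the eigen-equations, exactly as in the proof of Lemma \ref{le4}), and $n_1 \ge 2p \ge n_j - 1 + $ (when $n_j \ge p+1$, we need $n_1 \ge n_j$, which holds), the relevant quantity is $\le 0$, so $\lambda_1(D') \le \lambda$, and strictness follows because $\mathbf{x}$ is not the Perron vector of $D'$ unless the graphs are isomorphic. Iterating this move drives all of $n_2, \ldots, n_c$ down to $p$, while $n_1$ absorbs the slack, landing exactly at the claimed extremal graph $K_s \vee (K_{n-s-p(c-1)} + (c-1)K_p)$.

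An alternative, and probably cleaner, route is to invoke Lemma \ref{le4} as a black box after a relabeling: write each $K_{n_i} = K_p \vee (K_{n_i - p} + \overline{K_{n_i}})$? No—$K_{n_i}$ is not a join like that. So the honest route is the transfer argument above, carried out carefully with the diameter-$2$ observation so that all distance computations are explicit linear expressions in the clique sizes. The equality analysis comes for free from tracking when each transfer step is strict: equality in the final bound forces every intermediate move to have been vacuous, i.e. $n_j = p$ for all $j \ge 2$ from the start, which is exactly $(n_1, \ldots, n_c) = (n - s - p(c-1), p, \ldots, p)$.

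The main obstacle I anticipate is \textbf{justifying the inequality $y_1 \le y_j$ when $n_1 > n_j$} (equivalently, that per-vertex Perron weight is non-increasing in clique size) cleanly enough to make the sign bookkeeping rigorous, and handling the boundary case where the transfer would violate $n_1 \ge 2p$ is needed for $n_1$—but since we only ever \emph{increase} $n_1$, that constraint is preserved, so the only real work is the monotonicity of $y_i$ in $n_i$, which should follow by subtracting two eigen-equations and using $\lambda > 0$. A secondary nuisance is bookkeeping the constants $2x_v(\cdots)$ with the correct factor of $2$ from symmetry of $D$; I would set this up once and reuse it.
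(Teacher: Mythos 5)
There is a genuine gap at the heart of your transfer step: the Rayleigh quotient is applied in the wrong direction. To prove $\lambda_1(D') \le \lambda_1(D)$ you take $\mathbf{x}$ to be the Perron vector of $D$ and compute $\mathbf{x}^T D'\mathbf{x}$; but the Rayleigh principle only gives $\lambda_1(D') \ge \mathbf{x}^T D'\mathbf{x} = \lambda_1(D) + \mathbf{x}^T(D'-D)\mathbf{x}$, so showing $\mathbf{x}^T(D'-D)\mathbf{x} \le 0$ yields a statement of the form ``$\lambda_1(D')$ is at least something no bigger than $\lambda_1(D)$,'' which is vacuous. To conclude $\lambda_1(D) \ge \lambda_1(D')$ you must instead plug the Perron vector of the \emph{post-move} (smaller) graph $D'$ into the Rayleigh quotient of $D$ and show $\mathbf{y}^T(D-D')\mathbf{y} \ge 0$; your sign bookkeeping then has to be carried out with the Perron entries of $D'$, not of $D$. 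A second, smaller gap: even with the direction fixed, the sign condition you need is roughly $n_1 y_1 \ge (n_j-1)y_j$ with $y_1 \le y_j$, and this does not follow from $n_1 \ge n_j$ alone --- it requires an upper bound on the ratio $y_j/y_1$ (which in turn needs a lower bound on $\lambda_1$ such as $\lambda_1 > n-(c-1)p-1$), and the hypothesis $n_1 \ge 2p$ only controls this ratio at the last step of your iteration, not at intermediate steps where $n_j$ may be close to $n_1$.

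For comparison, the paper's proof is essentially the corrected, one-shot version of your idea: it takes $\mathbf{x}$ to be the Perron vector of the conjectured minimizer $G_1 = K_s\vee(K_{n-s-p(c-1)}+(c-1)K_p)$ (whose entries take only three values $x_1,x_2,x_3$, with the ratio $x_3/x_2 = 1 + \tfrac{n-s-cp}{\lambda_1+p+1}$ computed explicitly from the eigen-equations), extends it to a test vector for $D(G_2)$ with $G_2 = K_s\vee(K_{n_1}+\cdots+K_{n_c})$, and verifies $\mathbf{x}^T(D(G_2)-D(G_1))\mathbf{x} \ge 0$ in a single computation using $\lambda_1(D(G_1)) > n-(c-1)p-1$ and $n_1 \ge 2p$. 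Doing the comparison in one shot against the extremal graph is what makes the Perron-entry ratio computable in closed form and avoids the intermediate-step difficulties above; if you want to salvage the iterative transfer argument, you would need to prove the ratio bound $y_j/y_1$ uniformly along the whole chain of intermediate graphs, which is more work than the direct comparison.
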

\medskip
\noindent  \textbf{Proof.}
Let $G_1=K_s\vee(K_{n-s-p(c-1)}+(c-1)K_p).$ Then the distance matrix $D(G_1)$ of $G_1$ is as follows
\[
\bordermatrix{
                  & s    & n-s-(c-1)p  & p    &\cdots& p \cr
\hfill s          & J-I  &  J   & J    &\cdots& J   \cr
\hfill n-s-(c-1)p & J    & J-I  & 2J   &\cdots& 2J  \cr
\hfill p          & J    & 2J   & J-I  &\cdots& 2J  \cr
\hfill \vdots     &\vdots&\vdots&\vdots&\ddots&\vdots\cr
\hfill p          & J    &2J    & 2J   &\cdots&J-I
}.
\]
Let $\mathbf{x}$ be the Perron vector of $D(G_1)$. By symmetry, we take $x_u=x_1$ for all $u\in V(K_s),$ $x_v=x_2$ for all $v\in V(K_{n-s-p(c-1)}),$ and $x_w=x_3$ for all $w\in V((c-1)K_p).$ According to $D(G_1)\mathbf{x}=\lambda_1(D(G_1))\mathbf{x},$ we have
\begin{equation*}
\left\{
\begin{array}{c}
\lambda_1(D(G_1))x_2=sx_1+[n-s-(c-1)p-1]x_2+2(c-1)px_3,\hfill\\
\lambda_1(D(G_1))x_3=sx_1+2[n-s-(c-1)p]x_2+[(p-1)+2(c-2)p]x_3,\hfill
\end{array}
\right.
\end{equation*}
which leads to $$\lambda_1(D(G_1))(x_3-x_2)=[n-s-(c-1)p+1]x_2-(p+1)x_3.$$
It follows that  $$(\lambda_1(D(G_1))+p+1)x_3=[\lambda_1(D(G_1))+n-s-(c-1)p+1]x_2.$$
Note that $\lambda_1(D(G_1))>0$ and $p\geq 1$. Then
\begin{equation}\label{eq1}
x_3=\frac{\lambda_1(D(G_1))+n-s-(c-1)p+1}{\lambda_1(D(G_1))+p+1}x_2=1+\frac{n-s-cp}{\lambda_1(D(G_1))+p+1}x_2.
\end{equation}

Let $G_2=K_s\vee(K_{n_1}+K_{n_2}+\cdots+K_{n_c}).$ We can write its distance matrix $D(G_2)$ as follows
\[
\bordermatrix{
              & s    & n_1  & n_2  &\cdots& n_c \cr
\hfill s      & J-I  &  J   & J    &\cdots& J   \cr
\hfill n_1    & J    & J-I  & 2J   &\cdots& 2J  \cr
\hfill n_2    & J    & 2J   & J-I  &\cdots& 2J  \cr
\hfill\vdots  &\vdots&\vdots&\vdots&\ddots&\vdots\cr
\hfill n_c    & J    &2J    & 2J   &\cdots&J-I
}.
\]
Let $\mathbf{x}=(\underbrace{x_1, \ldots, x_1}_{s}, \underbrace{x_3, \ldots, x_3}_{(c-1)p}, \underbrace{x_2, \ldots, x_2}_{n_1}, \underbrace{x_2, \ldots, x_2}_{n_2-p}, \underbrace{x_2, \ldots, x_2}_{n_3-p}, \ldots, \underbrace{x_2, \ldots, x_2}_{n_c-p})^T.$ Then the corresponding matrix $D(G_2)-D(G_1)$ is as follows
\[
\bordermatrix{
               & s     & (c-1)p     & n_1   & n_2-p & n_3-p  &\cdots & n_c-p    \cr
\hfill s       & O     & O          & O     & O     & O      &\cdots & O        \cr
\hfill (c-1)p  & O     & O          & O     & -E_1  & -E_2   &\cdots & -E_{c-1} \cr
\hfill n_1     & O     & O          & O     & J     & J      &\cdots & J        \cr
\hfill n_2-p   & O     & -E_1^T     & J     & O     & J      &\cdots & J        \cr
\hfill n_3-p   & O     & -E_2^T     & J     & J     & O      &\cdots & J        \cr
\hfill \vdots  &\vdots &\vdots      &\vdots &\vdots &\vdots  &\ddots &\vdots    \cr
\hfill n_c-p   & O     & -E_{c-1}^T & J     & J     & J      &\cdots & O
},
\]
where $E_i$ denotes the matrix whose each entry from $[(i-1)p+1]$-th row to $ip$-th row is 1 and whose other entries are 0. Then
\begin{eqnarray*}
&&\lambda_1(D(G_2))-\lambda_1(D(G_1))\geq\mathbf{x}^{T}(D(G_2)-D(G_1))\mathbf{x}\\
&=& -(n_2-p)px_2x_3-(n_3-p)px_2x_3-\cdots-(n_c-p)px_2x_3+n_1\sum_{i=2}^{c}(n_i-p)x_2^2\\
&&+x_2(n_2-p)\Bigg[-px_3+n_1x_2+\sum_{i=2,i\neq2}^{c}(n_i-p)x_2\Bigg]+x_2(n_3-p)\Bigg[-px_3+n_1x_2\Bigg.\\
&&\Bigg.+\sum_{i=2,i\neq3}^{c}(n_i-p)x_2\Bigg]+\cdots+x_2(n_c-p)\Bigg[-px_3+n_1x_2+\sum_{i=2,i\neq c}^{c}(n_i-p)x_2\Bigg]\\
&=&n_1\sum_{i=2}^{c}(n_i-p)x_2^2+(n_2-p)[(n-s-n_2-(c-2)p)x_2^2-2px_2x_3]\\
&&+(n_3-p)[(n-s-n_3-(c-2)p)x_2^2-2px_2x_3]\\
&&+\cdots+(n_c-p)[(n-s-n_c-(c-2)p)x_2^2-2px_2x_3].
\end{eqnarray*}
Note that $G_1$ contains $K_{n-(c-1)p}$ as a proper subgraph. Then $\lambda_1(D(G_1))>\lambda_1(D(K_{n-(c-1)p}))=n-(c-1)p-1.$ Combining this with (\ref{eq1}), $p\geq 1, s\geq 1$ and $n_1\geq n_2\geq\cdots\geq n_c\geq p$, for any $k=2, 3,\ldots, c,$ we have
\begin{eqnarray*}
 && (n-s-n_k-(c-2)p)x_2^2-2px_2x_3\\
&=& x_2^2\left[n-s-n_k-(c-2)p-2p\left(1+\frac{n-s-cp}{\lambda_1(D(G_1))+p+1}\right)\right]\\
&=& x_2^2\left[n-s-n_k-cp-\frac{2p(n-s-cp)}{\lambda_1(D(G_1))+p+1}\right]\\
&>& x_2^2\left[n-s-n_k-cp-\frac{2p(n-s-cp)}{n-cp+2p}\right]\\
&=& x_2^2\left(n-s-n_k-cp-2p+\frac{4p^2+2sp}{n-cp+2p}\right)\\
&>& x_2^2(n-s-n_k-cp-2p)\\
&=& x_2^2\left(\sum_{i=1,i\neq k}^{c}n_i-cp-2p\right)\\
&\geq& x_2^2[n_1+(c-2)p-cp-2p]\\
&=& x_2^2(n_1-4p).
\end{eqnarray*}
Since $n_1\geq 2p$ and $n_2\geq n_3\geq\cdots\geq n_c\geq p,$  we have
\begin{eqnarray}\label{eq2}
&& \lambda_1(D(G_2))-\lambda_1(D(G_1))\nonumber\\
&\geq& n_1\sum_{i=2}^{c}(n_i-p)x_2^2+(n_2-p)[(n-s-n_2-(c-2)p)x_2^2-2px_2x_3]\nonumber\\
&&+(n_3-p)[(n-s-n_3-(c-2)p)x_2^2-2px_2x_3]\nonumber\\
&&+\cdots+(n_c-p)[(n-s-n_c-(c-2)p)x_2^2-2px_2x_3]\nonumber\\
&\geq& n_1\sum_{i=2}^{c}(n_i-p)x_2^2+x_2^2(n_1-4p)\sum_{i=2}^{c}(n_i-p)\nonumber\\
&=& x_2^2(2n_1-4p)\sum_{i=2}^{c}(n_i-p)\nonumber\\
&\geq& 0.
\end{eqnarray}
Hence $\lambda_1(D(G_2))\geq\lambda_1(D(G_1)).$

If $(n_1, n_2, \ldots, n_c)=(n-s-p(c-1), p, \ldots, p),$ then $K_s\vee(K_{n_1}+K_{n_2}+\cdots+K_{n_c})\cong K_s\vee(K_{n-s-p(c-1)}+(c-1)K_p),$ and we have $\lambda_1(D(G_1))= \lambda_1(D(G_2)).$ Conversely, if $\lambda_1(D(G_1))= \lambda_1(D(G_2)),$ then all the inequalities in (\ref{eq2}) must be equalities, and hence $n_2=n_3=\cdots=n_c=p.$ So $(n_1, n_2, \ldots, n_c)=(n-s-p(c-1), p, \ldots, p).$
\hspace*{\fill}$\Box$

\medskip
Now, we are in a position to present the proof of Theorem \ref{main1}.

\medskip
\noindent  \textbf{Proof of Theorem \ref{main1}.}
Let $G$ be a connected graph of order $n\geq \mbox{max}\{8\delta, \frac{1}{2}\delta^2+2\delta+2\}$ and minimum degree $\delta\geq 2.$ Suppose to the contrary that $G$ is not 1-tough. By the definition of 1-tough graphs, then $0<\tau(G)<1$, and hence there exists a vertex subset $S\subseteq V(G)$ such that $c(G-S)>|S|.$ Let $|S|=s$ and $c(G-S)=c$. Then $c\geq s+1,$ and hence $n\geq 2s+1.$ It is obvious that $G$ is a spanning subgraph of $G'=K_s\vee(K_{n_1}+K_{n_2}+\cdots+K_{n_{s+1}})$ for some integers $n_1\geq n_2\geq\cdots\geq n_{s+1}\geq 1$ and $\sum_{i=1}^{s+1}n_i=n-s.$ According to Lemma \ref{le1}, we have
\begin{equation}\label{eq3}
\lambda_1(D(G'))\leq \lambda_1(D(G)),
\end{equation}
where equality holds if and only if $G\cong G'$. Note that $s\geq 1$. Next we divide the proof into the following three cases.

\vspace{1.8mm}
\noindent\textbf{Case 1.} $s\geq\delta+1.$
\vspace{1mm}

Let $\tilde{G}=K_s\vee(K_{n-2s}+sK_1).$ By Lemma \ref{le4}, we have
\begin{equation}\label{eq4}
\lambda_1(D(\tilde{G}))\leq\lambda_1(D(G')),
\end{equation}
with equality holding if and only if $G'\cong \tilde{G}.$
Define $G^*=K_\delta\vee(K_{n-2\delta}+\delta K_1).$ Then its distance matrix $D(G^*)$ is
\[
\bordermatrix{
                  & \delta    & n-2\delta  & \delta \cr
\hfill \delta     & 2(J-I)    &  2J        & J      \cr
\hfill n-2\delta  & 2J        & J-I        & J      \cr
\hfill \delta     & J         & J          & J-I    \cr
}.
\]
We can partition the vertex set of $G^*$ as $V(G^*)=V(\delta K_{1})\cup V(K_{n-2\delta})\cup V(K_{\delta}).$ Then the quotient matrix of $D(G^*)$ with respect to this partition is
$$R_{\delta}=\left(
\begin{array}{ccc}
2(\delta-1) &2(n-2\delta) & \delta\\
2\delta     & n-2\delta-1 & \delta\\
\delta      & n-2\delta   & \delta-1
\end{array}
\right).
$$
By simple calculation, the characteristic polynomial of $R_{\delta}$ is
\begin{equation}\label{eq5}
P(R_\delta,x)=x^3-(n+a-4)x^2-(2an+3n-5a^2+a-5)x+a^2n-2an-2n-2a^3+5a^2+2.
\end{equation}
Note that the above partition is equitable. By Lemma \ref{le2}, $\lambda_1(D(G^*))=\lambda_1(R_\delta)$ is the largest root of the equation $P(R_\delta,x)=0.$ Recall that $\tilde{G}=K_{s}\vee (K_{n-2s}+sK_1)$. Observe that $D(\tilde{G})$ has the equitable quotient matrix $R_s$, which is obtained by replacing $\delta$ with $s$ in $R_\delta.$ Similarly, by Lemma \ref{le2}, $\lambda_1(D(\tilde{G}))=\lambda_1(R_s)$ is the largest root of the equation $P(R_s,x)=0.$ Then
$$P(R_\delta,x)-P(R_s,x)=(s-\delta)[x^2+(2n-5s-5\delta+1)x-sn-\delta n+2n+2s^2+2\delta s-5s+2\delta^2-5\delta].$$
\begin{claim}
$P(R_\delta,x)-P(R_s,x)>0$ for $x\in [n+\delta, +\infty)$.
\end{claim}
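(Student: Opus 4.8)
The plan is to use the factorization recorded just before the Claim, namely
$$P(R_\delta,x)-P(R_s,x)=(s-\delta)\,q(x),\qquad q(x):=x^{2}+(2n-5s-5\delta+1)x-sn-\delta n+2n+2s^{2}+2\delta s-5s+2\delta^{2}-5\delta.$$
In Case 1 we have $s\geq\delta+1$, so $s-\delta\geq1>0$ and it suffices to show that the upward parabola $q$ is positive on $[n+\delta,+\infty)$. My first step is to check that $n+\delta$ lies to the right of the axis of symmetry $x_{0}=\frac{1}{2}(5s+5\delta-1-2n)$ of $q$: the inequality $x_{0}\leq n+\delta$ is equivalent to $5s+3\delta-1\leq 4n$, which follows at once from $s\leq\frac{n-1}{2}$ (a consequence of $c(G-S)>|S|$, i.e.\ $n\geq 2s+1$) together with $n\geq 8\delta$. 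Hence $q$ is nondecreasing on $[n+\delta,+\infty)$, and the Claim reduces to $q(n+\delta)>0$.

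Substituting $x=n+\delta$ and simplifying gives
$$q(n+\delta)=2s^{2}-(6n+3\delta+5)s+3n^{2}-2\delta n-2\delta^{2}+3n-4\delta,$$
which I now read as an upward parabola in $s$. Its axis of symmetry $\frac{6n+3\delta+5}{4}$ exceeds $\frac{n-1}{2}$, so on the admissible range $1\leq s\leq\frac{n-1}{2}$ the minimum of $q(n+\delta)$ is attained at $s=\frac{n-1}{2}$; plugging this value in yields
$$q(n+\delta)\ \geq\ \tfrac12\bigl(n^{2}+(5-7\delta)n-4\delta^{2}-5\delta+6\bigr)=:\tfrac12\,h(n).$$
Finally I treat $h$ as an upward parabola in $n$; its axis $\frac{7\delta-5}{2}$ is below $8\delta$, so $h$ is increasing for $n\geq 8\delta$, and $h(8\delta)=4\delta^{2}+35\delta+6>0$ since $\delta\geq2$. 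Chaining the inequalities gives $q(x)\geq q(n+\delta)\geq\frac12 h(n)>0$ for all $x\geq n+\delta$, and multiplying by $s-\delta>0$ proves the Claim.

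Everything here is elementary parabola bookkeeping; the only thing to watch is the direction of monotonicity at each of the two reductions (first the substitution $x\mapsto n+\delta$, then $s\mapsto\frac{n-1}{2}$), so that each replacement genuinely produces a \emph{lower} bound for $q$ rather than an upper bound. I do not expect a real obstacle — the hypotheses $n\geq 2s+1$ and $n\geq 8\delta$ are amply strong, and in fact the other order hypothesis $n\geq\frac12\delta^{2}+2\delta+2$ is not needed for this particular Claim.
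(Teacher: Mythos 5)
Your proposal is correct and follows essentially the same route as the paper's own proof: factor out $(s-\delta)$, verify the axis of symmetry of the remaining quadratic lies left of $n+\delta$ so it is increasing there, then lower-bound $f(n+\delta)$ by substituting $s=\frac{n-1}{2}$ and finally $n=8\delta$, arriving at $2\delta^{2}+\frac{35}{2}\delta+3>0$ (your $\frac12 h(8\delta)$). Your side observation that the hypothesis $n\geq\frac12\delta^{2}+2\delta+2$ is not used in this Claim is also consistent with the paper, which invokes only $n\geq 2s+1$, $n\geq 8\delta$ and $\delta\geq 2$ here.
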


\begin{proof} Define $f(x)=x^2+(2n-5s-5\delta+1)x-sn-\delta n+2n+2s^2+2\delta s-5s+2\delta^2-5\delta.$
Note that $s\geq\delta+1.$ It suffices to prove that $f(x)>0$ for $x\in [n+\delta, +\infty).$ Recall that $n\geq 2s+1$. Then $\delta+1\leq s \leq \frac{n-1}{2},$ and hence the symmetry axis of $f(x)$
\begin{eqnarray*}
x &=&-n+\frac{5}{2}s+\frac{5}{2}\delta-\frac{1}{2}\\
  &=&(n+\delta)-2n+\frac{5}{2}s+\frac{3}{2}\delta-\frac{1}{2}\\
  &\leq&(n+\delta)-2(2s+1)+\frac{5}{2}s+\frac{3}{2}\delta-\frac{1}{2}\\
  &=&(n+\delta)-\frac{3}{2}s+\frac{3}{2}\delta-\frac{5}{2}\\
  &\leq&(n+\delta)-\frac{3}{2}(\delta+1)+\frac{3}{2}\delta-\frac{5}{2}\\
  &=&(n+\delta)-4\\
  &<&n+\delta.
\end{eqnarray*}
This implies that $f(x)$ is monotonically increasing with respect to $x\in [n+\delta, +\infty)$. Since $\delta+1\leq s\leq \frac{n-1}{2}$, $n\geq 8\delta$ and $\delta \geq 2$, we have
\begin{eqnarray*}
f(x)&\geq& f(n+\delta)\\
&=&2s^2-(6n+3\delta+5)s+3n^2-2\delta n+3n-2\delta^2-4\delta\\
&\geq&2(\frac{n-1}{2})^2-(6n+3\delta+5)(\frac{n-1}{2})+3n^2-2\delta n+3n-2\delta^2-4\delta\\
&=& \frac{1}{2}n^2-(\frac{7}{2}\delta-\frac{5}{2})n-2\delta^2-\frac{5}{2}\delta+3\\
&\geq& \frac{1}{2}(8\delta)^2-(\frac{7}{2}\delta-\frac{5}{2})(8\delta)-2\delta^2-\frac{5}{2}\delta+3\\
&=& 2\delta^2+\frac{35}{2}\delta+3\\
&>& 0.
\end{eqnarray*}
It follows that $P(R_{\delta},x)>P(R_{s},x)$ for $x\geq n+\delta.$
\end{proof}
\noindent Note that
\begin{eqnarray*}
W(G^*)&=&\sum_{i<j}d_{ij}(G^*)\\
&=&\frac{2[1+(\delta-1)](\delta-1)}{2}+2(n-2\delta)\delta+\delta^2+\frac{[1+(n-\delta-1)](n-\delta-1)}{2}\\
&=&\frac{1}{2}n^2+(\delta-\frac{1}{2})n-\frac{3}{2}\delta^2-\frac{1}{2}\delta.
\end{eqnarray*}
By Lemma \ref{le3}, $n\geq 8\delta$ and $\delta \geq 2,$ we have
\begin{eqnarray*}
\lambda_1(D(G^*))\geq\frac{2W(G^*)}{n}&=&\frac{n^2+(2\delta-1)n-3\delta^2-\delta}{n}\\
&=&n+2\delta-1-\frac{3\delta^2+\delta}{n}\\
&\geq&(n+\delta)+\delta-1-\frac{3\delta^2+\delta}{8\delta}\\
&=&(n+\delta)+\frac{5\delta^2-9\delta}{8\delta}\\
&>&n+\delta.
\end{eqnarray*}
Combining $P(R_{\delta},x)>P(R_{s},x)$ for $x\in [n+\delta, +\infty)$ and $\lambda_1(D(G^*))>n+\delta,$ we obtain that $\lambda_1(D(G^*))<\lambda_1(D(\tilde{G})).$
By (\ref{eq3}) and (\ref{eq4}), we have
$$\lambda_1(D(G^*))<\lambda_1(D(\tilde{G}))\leq\lambda_1(D(G'))\leq\lambda_1(D(G)),$$
which contradicts the assumption.

\vspace{1.8mm}
\noindent\textbf{Case 2.} $s=\delta$.
\vspace{1mm}

Then $G'\cong K_\delta\vee(K_{n_1}+K_{n_2}+\cdots+K_{n_{\delta+1}})$. By Lemma \ref{le4}, we have
\begin{equation}\label{eq6}
\lambda_1(D(K_\delta\vee(K_{n-2\delta}+\delta K_1)))\leq\lambda_1(D(G')),
\end{equation}
with equality holding if and only if $G'\cong K_\delta\vee(K_{n-2\delta}+\delta K_1).$
By (\ref{eq3}) and (\ref{eq6}), we have $$\lambda_1(D(K_\delta\vee(K_{n-2\delta}+\delta K_1)))\leq\lambda_1(D(G)),$$
where equality holds if and only $G\cong K_\delta\vee(K_{n-2\delta}+\delta K_1).$
By the assumption $\lambda_1(D(G))\leq \lambda_1(D(K_\delta\vee(K_{n-2\delta}+\delta K_1))),$ we have $\lambda_1(D(K_\delta\vee(K_{n-2\delta}+\delta K_1)))=\lambda_1(D(G)),$ and hence $G\cong K_\delta\vee(K_{n-2\delta}+\delta K_1)$ (see Fig. \ref{fig1}). Take $S=V(K_\delta).$ Then $$\frac{|S|}{c(K_\delta\vee(K_{n-2\delta}+\delta K_1)-S)}=\frac{\delta}{\delta+1}<1,$$ and hence $\tau(K_\delta\vee(K_{n-2\delta}+\delta K_1)<1.$ This implies that $K_\delta\vee(K_{n-2\delta}+\delta K_1)$ is not 1-tough. So $G'\cong K_\delta\vee(K_{n-2\delta}+\delta K_1).$

\begin{figure}
\centering
{\includegraphics[width=0.35\textwidth]{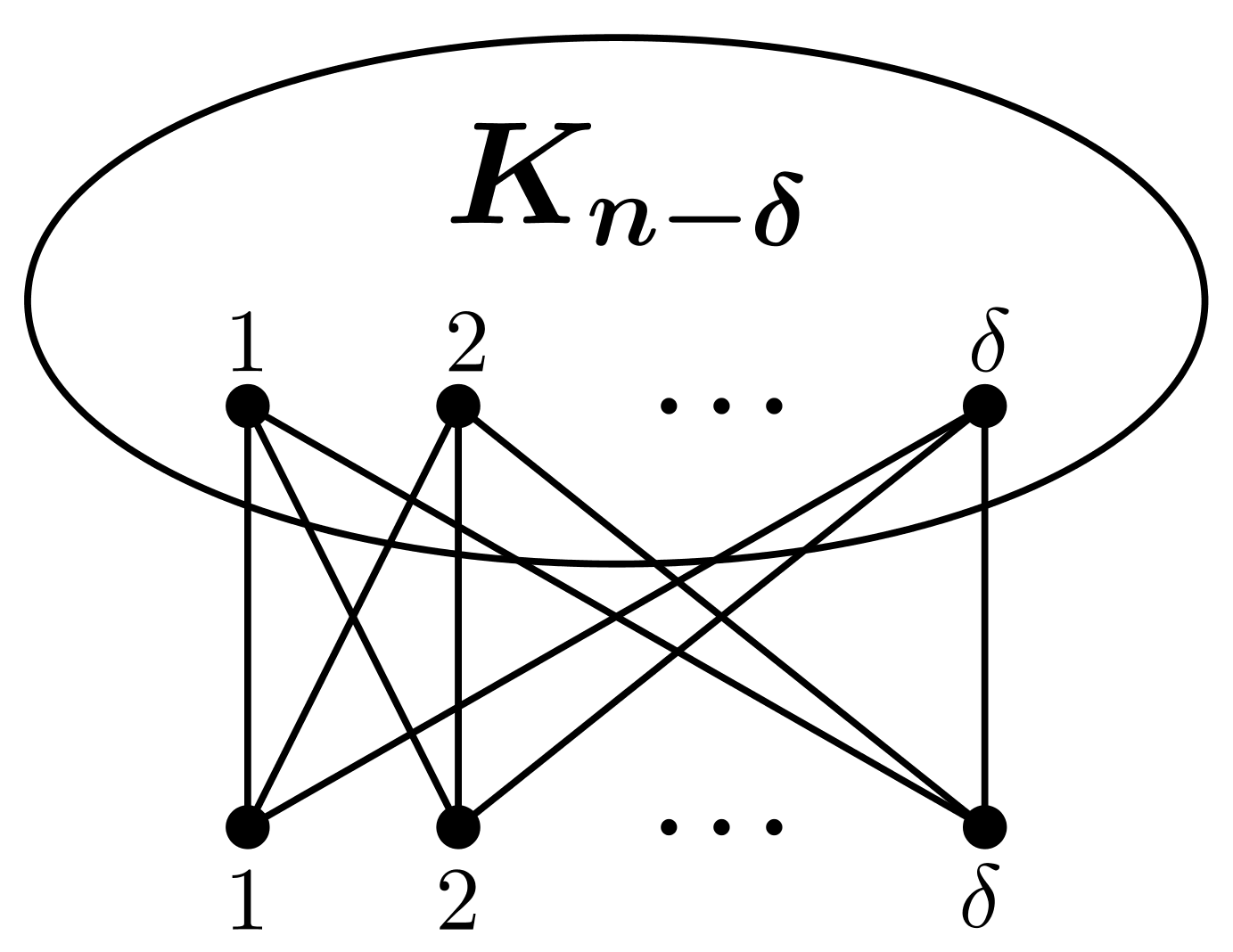}}\\
\caption{Graph $K_{\delta}\vee (K_{n-2\delta}+\delta K_{1}).$}
\label{fig1}
\end{figure}

\vspace{1.8mm}
\noindent\textbf{Case 3.} $1\leq s<\delta.$
\vspace{1mm}

Recall that $G$ is a spanning subgraph of $G'=K_s\vee(K_{n_1}+K_{n_2}+\cdots+K_{n_{s+1}}),$ where $n_1\geq n_2\geq\cdots\geq n_{s+1}$ and $\sum_{i=1}^{s+1}n_i=n-s.$ Note that $\delta(G')\geq\delta(G)=\delta.$ Then $n_{s+1}-1+s\geq\delta.$ Hence $n_1\geq n_2\geq\cdots\geq n_{s+1}\geq \delta-s+1.$ We assert that $n_1\geq 2(\delta-s+1).$ In fact, if $n_1\leq 2\delta-2s+1.$ Notice that $n_1\geq n_2\geq\cdots\geq n_{s+1}$ and $1\leq s \leq \delta-1.$ Then we have
\begin{eqnarray*}
n&=& s+n_1+n_2+\cdots+n_{s+1}\\
&\leq& s+(s+1)(2\delta-2s+1)\\
&=&-2s^2+2\delta s+2\delta+1\\
&\leq& -2(\frac{1}{2}\delta)^2+2\delta(\frac{1}{2}\delta)+2\delta+1\\
&=&\frac{1}{2}\delta^2+2\delta+1,
\end{eqnarray*}
which contradicts $n\geq \frac{1}{2}\delta^2+2\delta+2.$ Let $\hat{G}=K_s\vee(K_{n-s-(\delta-s+1)s}+sK_{\delta-s+1}).$ By Lemma \ref{le6}, we have
\begin{equation}\label{eq7}
\lambda_1(D(\hat{G}))\leq\lambda_1(D(G')),
\end{equation}
where equality holds if and only if $G'\cong \hat{G}.$ Next we divide the proof into two cases.

\vspace{1.8mm}
\noindent\textbf{Case 3.1.} $s=1.$
\vspace{1mm}

Then $\hat{G}=K_1\vee(K_{n-\delta-1}+K_\delta),$ and its distance matrix is
\[
\bordermatrix{
                  & \delta    & n-\delta-1 & 1 \cr
\hfill \delta     & J-I       &  2J        & J      \cr
\hfill n-\delta-1 & 2J        & J-I        & J      \cr
\hfill 1          & J         & J          & O    \cr
}.
\]
Recall that $G^*=K_\delta\vee(K_{n-2\delta}+\delta K_1).$ Let $\mathbf{x}$ be the Perron vector of $D(G^*)$. By symmetry, $\mathbf{x}$ takes the same values on the vertices of $V(\delta K_1), V(K_{n-2\delta})$ and $V(K_\delta),$ respectively. We denote the entry of $\mathbf{x}$ by $x_1, x_2$ and $x_3$ corresponding to the vertices in the above three vertex sets, respectively. By $D(G^*)\mathbf{x}=\lambda_1(D(G^*))\mathbf{x},$ we have
\begin{equation*}
\left\{
\begin{array}{c}
\lambda_1(D(G^*))x_1=2(\delta-1)x_1+2(n-2\delta)x_2+\delta x_3,\hfill\\
\lambda_1(D(G^*))x_3=\delta x_1+(n-2\delta)x_2+(\delta-1) x_3,\hfill
\end{array}
\right.
\end{equation*}
which leads to $$\lambda_1(D(G^*))(2x_3-x_1)=2x_1+(\delta-2)x_3.$$
It follows that  $$(\lambda_1(D(G^*))+1)(2x_3-x_1)=x_1+\delta x_3.$$ Note that $x_1, x_3>0$ and $\lambda_1(D(G^*))>0.$ Then $2x_3>x_1.$ By direct calculation, we obtain that the matrix $D(\hat{G})-D(G^*)$ is
\[
\bordermatrix{
               & \delta    & n-2\delta  & \delta-1 &1 \cr
\hfill \delta      & -(J-I) & O         & J        &O \cr
\hfill n-2\delta   & O      & O         & O        &O \cr
\hfill \delta-1    & J      & O         & O        &O \cr
\hfill 1           & O      & O         & O        &O
}.
\]
Therefore, we have
\begin{eqnarray*}
\lambda_1(D(\hat{G}))-\lambda_1(D(G^*))&\geq&\mathbf{x}^{T}(D(\hat{G})-D(G^*))\mathbf{x}\\
&=&-\delta(\delta-1)x_1^2+2\delta(\delta-1)x_1x_3\\
&=&\delta(\delta-1)(2x_3-x_1)x_1\\
&>&0.
\end{eqnarray*}
Hence $\lambda_1(D(G^*))<\lambda_1(D(\hat{G})).$

\vspace{1.8mm}
\noindent\textbf{Case 3.2.} $2\leq s<\delta.$
\vspace{1mm}

Recall that $\hat{G}=K_s\vee(K_{n-s-(\delta-s+1)s)}+sK_{\delta-s+1}).$ The distance matrix $D(\hat{G})$ of $\hat{G}$ becomes
\[
\bordermatrix{
                          &\delta-s+1&\cdots &\delta-s+1 &n-s-(\delta-s+1)s  & s \cr
\hfill \delta-s+1         & J-I      &\cdots &2J         &2J                 & J \cr
\hfill \vdots             & 2J       &\cdots &2J         &2J                 & J \cr
\hfill \delta-s+1         & 2J       &\cdots &J-I        &2J                 & J \cr
\hfill n-s-(\delta-s+1)s  & 2J       &\cdots &2J         &J-I                & J \cr
\hfill s                  & J        &\cdots &J          &J                  & J-I
}.
\]
We can partition the vertex set of $\hat{G}$ as $V(\hat{G})=V(s K_{\delta-s+1})\cup V(K_{n-s-(\delta-s+1)s})\cup V(K_{s}).$ Then the quotient matrix of $\hat{G}$ with respect to the partition is
$$R_{s,\delta}=\left(
\begin{array}{ccc}
(\delta-s)+2(s-1)(\delta-s+1) &2[n-s-(\delta-s+1)s] & s\\
2s(\delta-s+1)                & n-s-(\delta-s+1)s-1 & s\\
s(\delta-s+1)                 & n-s-(\delta-s+1)s   & s-1
\end{array}
\right).
$$
By direct calculation, the characteristic polynomial of $R_{s,\delta}$ is
\begin{eqnarray}\label{eq8}
P(R_{s,\delta},x)&=&x^3+[s^2-(\delta+2)s+\delta-n+4]x^2+[2s^4-(4\delta+6)s^3+(2\delta^2+5\delta+2n\nonumber\\
&&+5)s^2+(\delta^2-2n\delta-n-3)s-n\delta+2\delta-3n+5]x-s^5+(2\delta+5)s^4\nonumber\\
&&-(\delta^2+7\delta+n+8)s^3+(2\delta^2+n\delta+5\delta+3n+4)s^2+(\delta^2-2n\delta+\delta-n-1)s\nonumber\\
&&-n\delta+\delta-2n+2.
\end{eqnarray}
Note that the partition is equitable. By Lemma \ref{le2}, $\lambda_1(D(\hat{G}))=\lambda_1(R_{s,\delta})$ is the largest root of the equation $P(R_{s,\delta},x)=0.$ Note that $G^*$ contains $K_{n-\delta}$ as a proper subgraph. Then
$\lambda_1(D(G^*))>\lambda_1(D(K_{n-\delta}))=n-\delta-1.$
Combining (\ref{eq5}) and (\ref{eq8}), we obtain that
\begin{eqnarray*}
&&P(R_\delta,n-\delta-1)-P(R_{s,\delta},n-\delta-1)\\
&=&(\delta-s)[3(s-1)n^2+(2s^3-(2\delta+7)s^2-(5\delta-4)s+11\delta+1)n-s^4-(\delta-3)s^3\\
&&+(2\delta^2+6\delta-2)s^2+(2\delta^2-5\delta)s-9\delta^2-\delta]\\
&\triangleq&(\delta-s)g(n).
\end{eqnarray*}
Note that $2\leq s\leq \delta-1$ and $\delta\geq s+1\geq 3$. Hence the symmetry axis of $g(n)$ is
\begin{eqnarray*}
n&=&\frac{-2s^3+(2\delta+7)s^2+(5\delta-4)s-11\delta-1}{6(s-1)}\\
&=&\frac{-2(s-1)^3+(2\delta+1)(s-1)^2+(9\delta+4)(s-1)-4\delta}{6(s-1)}\\
&=&-\frac{1}{3}(s-1)^2+\frac{2\delta+1}{6}(s-1)+\frac{9\delta+4}{6}-\frac{4\delta}{6(s-1)}\\
&<&-\frac{1}{3}(s-1)^2+\frac{2\delta+1}{6}(s-1)+\frac{9\delta+4}{6}.\\
\end{eqnarray*}
If $3\leq\delta\leq\frac{9}{2},$ then
\begin{eqnarray*}
n&<&-\frac{1}{3}(s-1)^2+\frac{2\delta+1}{6}(s-1)+\frac{9\delta+4}{6}\\
&\leq&-\frac{1}{3}(\delta-2)^2+\frac{2\delta+1}{6}(\delta-2)+\frac{9\delta+4}{6}\\
&=&\frac{7}{3}\delta-1<\frac{1}{2}\delta^2+2\delta+2.
\end{eqnarray*}
If $\delta> \frac{9}{2},$ then
\begin{eqnarray*}
n&<&-\frac{1}{3}(s-1)^2+\frac{2\delta+1}{6}(s-1)+\frac{9\delta+4}{6}\\
&\leq&-\frac{1}{3}(\frac{1}{2}\delta+\frac{1}{4})^2+\frac{2\delta+1}{6}(\frac{1}{2}\delta+\frac{1}{4})+\frac{9\delta+4}{6}\\
&=&\frac{1}{12}\delta^2+\frac{19}{12}\delta+\frac{11}{16}<\frac{1}{2}\delta^2+2\delta+2.
\end{eqnarray*}
This implies that $g(n)$ is monotonically increasing with respect to $n\in [\frac{1}{2}\delta^2+2\delta+2, +\infty)$.
Note that $s\geq 2$ and $\delta\geq s+1\geq3.$ Then we have
\begin{eqnarray*}
g(n)&\geq&g(\frac{1}{2}\delta^2+2\delta+2)\\
&=&\frac{\delta}{4}[\delta(3(s-1)\delta^2-(4s^2-14s+2)\delta+4s^3-22s^2+48s-18)+12s^3-48s^2\\
&&+68s-4]-(s-1)(s^3-6s^2+10s-10)\\
&\geq&\frac{\delta}{4}[\delta(3s^3-9s^2+57s-23)+12s^3-48s^2+68s-4]-(s-1)(s^3-6s^2\\
&&+10s-10)\\
&\geq&\frac{\delta}{4}(3s^4+6s^3+102s-27)-(s-1)(s^3-6s^2+10s-10)\\
&\geq&\frac{1}{4}(3s^5+5s^4+34s^3+38s^2+115s-67)\\
&>&0.
\end{eqnarray*}
Since $\delta\geq s+1,$ we have
\begin{equation}\label{eq9}
P(R_\delta,n-\delta-1)>P(R_{s,\delta},n-\delta-1).
\end{equation}
For $x\in [n-\delta-1,+\infty)$ and $s\geq2,$ we have
\begin{eqnarray*}
&&P'(R_\delta,x)-P'(R_{s,\delta},x)\\
&=&(\delta-s)[(2s-4)x+2s^3-(2\delta+6)s^2-(\delta-2n-5)s+5\delta-n-3]\\
&\geq&(\delta-s)[(2s-4)(n-\delta-1)+2s^3-(2\delta+6)s^2-(\delta-2n-5)s+5\delta-n-3]\\
&=&(\delta-s)[2s^3-(2\delta+6)s^2+(4n-3\delta+3)s+9\delta-5n+1]\\
&\triangleq&(\delta-s)h(s).
\end{eqnarray*}
Next we prove that $h(s)>0$ for $2\leq s\leq \delta-1.$ By direct calculation, we deduce that
$$h'(s)=6s^2-4(\delta+3)s+4n-3\delta+3,$$
and the  symmetry axis of $h'(s)$ is $s=\frac{1}{3}\delta+1.$ Since $n\geq\frac{1}{2}\delta^2+2\delta+2$ and $\delta\geq 3,$ we have
$$h'(s)\geq h'(\frac{1}{3}\delta+1)=4n-\frac{2}{3}\delta^2-7\delta-3\geq\frac{4}{3}\delta^2+\delta+5>0.$$
It follows that $h(s)$ is monotonically increasing for $2\leq s\leq \delta-1.$ Combining this with $n\geq 8\delta$ and $\delta\geq 3,$ we obtain that
$$h(s)\geq h(2)=3n-5\delta-1\geq 19\delta-1>0.$$
Note that $\delta\geq s+1.$ Then
\begin{equation}\label{eq10}
P'(R_\delta,x)>P'(R_{s,\delta},x).
\end{equation}
Moreover, we consider $P'(R_\delta,x)=3x^2-2(\delta+n-4)x+5\delta^2-2n\delta-\delta-3n+5.$
Note that $n\geq 8\delta$ and $\delta\geq 3.$ Then the symmetry axis of $P'(R_\delta,x)$ is
\begin{eqnarray*}
x=\frac{\delta+n-4}{3}&=&(n-\delta-1)-\frac{2}{3}n+\frac{4}{3}\delta-\frac{1}{3}\\
&\leq&(n-\delta-1)-\frac{2}{3}(8\delta)+\frac{4}{3}\delta-\frac{1}{3}\\
&=&(n-\delta-1)-4\delta-\frac{1}{3}\\
&<&n-\delta-1.
\end{eqnarray*}
Then we have $$P'(R_\delta,x)\geq P'(R_\delta,n-\delta-1)=n^2-(8\delta-1)n+10\delta^2-\delta\geq10\delta^2+7\delta>0.$$
It follows that $P(R_\delta,x)$ is monotonically increasing with respect to $x\in [n-\delta-1, +\infty).$ Combining this with (\ref{eq9}) and (\ref{eq10}), we deduce that $\lambda_1(D(G^*))<\lambda_1(D(\hat{G})).$

By (\ref{eq3}), (\ref{eq7}), Case 3.1 and Case 3.2, we have $$\lambda_1(D(G^*))<\lambda_1(D(\hat{G}))\leq\lambda_1(D(G'))\leq\lambda_1(D(G)),$$
which contradicts the assumption.
\hspace*{\fill}$\Box$

\section{Proof of Theorem \ref{main2}}

Let $W^{(2)}(G)$ denote the sum of the squares of the distances between all unordered pairs of vertices in the graph. That is to say,
$$W^{(2)}(G)=\sum_{1\leq i<j\leq n}d_{ij}^2(G).$$

\begin{lem}[Zhou and Trinajsti$\mathrm{\acute{c}}$ \cite{Zhou2010,Zhou2007,Zhou2007(2)}]\label{le7}
Let $G$ be a graph on $n\geq 2$ vertices with sum of the squares of the distances between all unordered pairs of vertices $W^{(2)}(G).$ Then
$$\lambda_1(D(G))\leq \sqrt{\frac{2(n-1)W^{(2)}(G)}{n}}$$
with equality if and only if $G$ is the complete graph $K_n$, and if $G$ has exactly one positive distance eigenvalue, then
$$\lambda_1(D(G))\geq \sqrt{W^{(2)}(G)}$$
with equality if and only if $G$ is $K_2.$
\end{lem}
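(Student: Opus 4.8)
The plan is to derive both inequalities purely from the first two spectral moments of the distance matrix, and to settle the equality cases by reading off the rigid structure that the extremal spectra impose. Write $\lambda_i=\lambda_i(D(G))$. As $D(G)$ is real symmetric with zero diagonal---and, $G$ being connected, nonnegative and irreducible---we have the two identities $\sum_{i=1}^n\lambda_i=\operatorname{tr}(D(G))=0$ and $\sum_{i=1}^n\lambda_i^2=\operatorname{tr}(D(G)^2)=\sum_{i=1}^n\sum_{j=1}^n d_{ij}^2(G)=2W^{(2)}(G)$; these, together with the Perron--Frobenius theorem, are the only inputs needed, and no earlier lemma of the paper is used.

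For the upper bound I would write $\lambda_1=-\sum_{i=2}^n\lambda_i$ and apply the Cauchy--Schwarz inequality to $(\lambda_2,\dots,\lambda_n)$ against $(1,\dots,1)$, obtaining $\lambda_1^2\le(n-1)\sum_{i=2}^n\lambda_i^2=(n-1)\bigl(2W^{(2)}(G)-\lambda_1^2\bigr)$; rearranging gives $n\lambda_1^2\le2(n-1)W^{(2)}(G)$, which is the asserted bound. Equality in Cauchy--Schwarz forces $\lambda_2=\cdots=\lambda_n$, and then $\sum_i\lambda_i=0$ together with $D(G)\neq O$ (true for $n\ge2$) makes this common value $\mu:=-\lambda_1/(n-1)<0$. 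Hence $D(G)-\mu I=(\lambda_1-\mu)\mathbf{x}\mathbf{x}^T$ is rank one, where $\mathbf{x}>0$ is the unit Perron vector; reading the zero diagonal forces $x_i^2=-\mu/(\lambda_1-\mu)$ for every $i$, so $\mathbf{x}=\tfrac1{\sqrt n}\mathbf{1}$ and every off-diagonal entry of $D(G)$ equals the positive constant $(\lambda_1-\mu)/n$. Since a connected graph on $n\ge2$ vertices contains an edge, this common distance must be $1$, whence $G\cong K_n$; conversely $D(K_n)=J-I$ attains the bound.

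For the lower bound, suppose $G$ has exactly one positive distance eigenvalue, i.e.\ $\lambda_1>0\ge\lambda_2\ge\cdots\ge\lambda_n$. Then $\lambda_1=-\sum_{i=2}^n\lambda_i=\sum_{i=2}^n|\lambda_i|$, so squaring and dropping the nonnegative cross terms yields $\lambda_1^2\ge\sum_{i=2}^n|\lambda_i|^2=\sum_{i=2}^n\lambda_i^2$; combining with $2W^{(2)}(G)=\lambda_1^2+\sum_{i=2}^n\lambda_i^2$ gives $2W^{(2)}(G)\le2\lambda_1^2$, hence $\lambda_1\ge\sqrt{W^{(2)}(G)}$. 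Equality forces all but one of $\lambda_2,\dots,\lambda_n$ to vanish, so by $\sum_i\lambda_i=0$ the spectrum of $D(G)$ is $\{\lambda_1,0^{(n-2)},-\lambda_1\}$ and $D(G)=\lambda_1\bigl(\mathbf{x}\mathbf{x}^T-\mathbf{y}\mathbf{y}^T\bigr)$ for orthonormal eigenvectors $\mathbf{x}>0$ and $\mathbf{y}$. The zero diagonal gives $x_i^2=y_i^2$, so $y_i=\varepsilon_i x_i$ with $\varepsilon_i\in\{\pm1\}$; for distinct indices $i,j$ with $\varepsilon_i=\varepsilon_j$ one would get $d_{ij}(G)=\lambda_1 x_ix_j(1-\varepsilon_i\varepsilon_j)=0$, which is impossible, so each sign is attained by at most one vertex, while orthogonality rules out $\mathbf{y}=\pm\mathbf{x}$; this leaves $n=2$, and a connected graph on two vertices is $K_2$, which indeed satisfies $\lambda_1=\sqrt{W^{(2)}(G)}=1$.

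Both inequalities are immediate once the moment identities are in place, so I expect the only real work to lie in the two equality analyses: showing that a distance matrix forced to be rank one (respectively rank two with spectrum $\{\lambda_1,0,\dots,0,-\lambda_1\}$) can only come from $K_n$ (respectively $K_2$). The observation that a connected graph has an edge, and the orthogonality computation for the pair of eigenvectors, are what close those two gaps, and I do not foresee any further obstacle.
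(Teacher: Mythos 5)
Your proposal is correct. Note that the paper does not prove this statement at all: it is quoted as a known result of Zhou and Trinajsti\'{c} and used as a black box, so there is no in-paper argument to compare against. Your derivation --- the two trace identities $\sum_i\lambda_i=0$ and $\sum_i\lambda_i^2=2W^{(2)}(G)$, Cauchy--Schwarz on $(\lambda_2,\dots,\lambda_n)$ for the upper bound, the triangle-inequality squaring for the lower bound, and the rank-one (resp.\ rank-two) rigidity analysis of the zero-diagonal distance matrix for the equality cases --- is sound and is essentially the standard proof appearing in the cited sources, so it fills the gap cleanly.
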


Now we are ready to present the proof of Theorem \ref{main2}.

\medskip
\noindent  \textbf{Proof of Theorem \ref{main2}.}
Let $G$ be a connected graph of order $n.$
Assume to the contrary that $G$ is not $t$-tough. Then $0<\tau(G)<t,$ and hence there exists a vertex subset $S\subseteq V(G)$ such that $tc(G-S)>|S|.$ Let $|S|=s$ and $c(G-S)=c$. Then $tc>s.$

(i) When $t$ is a positive integer, we have $tc\geq s+1.$ Note that $G$ is a spanning subgraph of $G'=K_{tc-1}\vee(K_{n_1}+K_{n_2}+\cdots+K_{n_{c}}),$ where $n_1\geq n_2\geq\cdots\geq n_{c}\geq 1$ and $\sum_{i=1}^{c}n_i=n-tc+1.$ By Lemma \ref{le1}, we have
\begin{equation}\label{eq11}
\lambda_1(D(G'))\leq \lambda_1(D(G)),
\end{equation}
where equality holds if and only if $G\cong G'$.
Let $G''=K_{tc-1}\vee(K_{n-(t+1)c+2}+(c-1)K_1).$ By Lemma \ref{le4}, we have
\begin{equation}\label{eq12}
\lambda_1(D(G''))\leq \lambda_1(D(G')),
\end{equation}
with equality holding if and only if $G'\cong G''.$
Next we divide the proof into two cases according to different values of $c\geq2$.

\vspace{1.5mm}
\noindent\textbf{Case 1.} $c=2$.
\vspace{1mm}

Then $G''= K_{2t-1} \vee (K_{n-2t}+K_{1})$. By (\ref{eq11}) and (\ref{eq12}), we have
$$\lambda_1(D(K_{2t-1} \vee (K_{n-2t}+K_{1})))\leq \lambda_1(D(G)),$$
where equality holds if and only if $G\cong K_{2t-1} \vee (K_{n-2t}+K_{1})$. By the assumption $\lambda_1(D(G))\leq\lambda_1(D(K_{2t-1} \vee (K_{n-2t}+K_{1}))),$ we have $\lambda_1(D(K_{2t-1} \vee (K_{n-2t}+K_{1})))=\lambda_1(D(G))$, and hence $G\cong K_{2t-1} \vee (K_{n-2t}+K_{1})$ (see Fig. \ref{fig2}). Take $S=V(K_{2t-1}).$ Then $$\frac{|S|}{c(K_{2t-1}\vee(K_{n-2t}+K_1)-S)}=\frac{2t-1}{2}<t,$$ and hence $\tau(K_{2t-1}\vee(K_{n-2t}+K_1)<t.$ This implies that $K_{2t-1} \vee (K_{n-2t}+K_{1})$ is not $t$-tough. So $G\cong K_{2t-1} \vee (K_{n-2t}+K_{1}).$

\vspace{1.5mm}
\noindent\textbf{Case 2.} $c\geq 3.$
\vspace{1mm}

\begin{figure}
\centering
\includegraphics[width=0.35\textwidth]{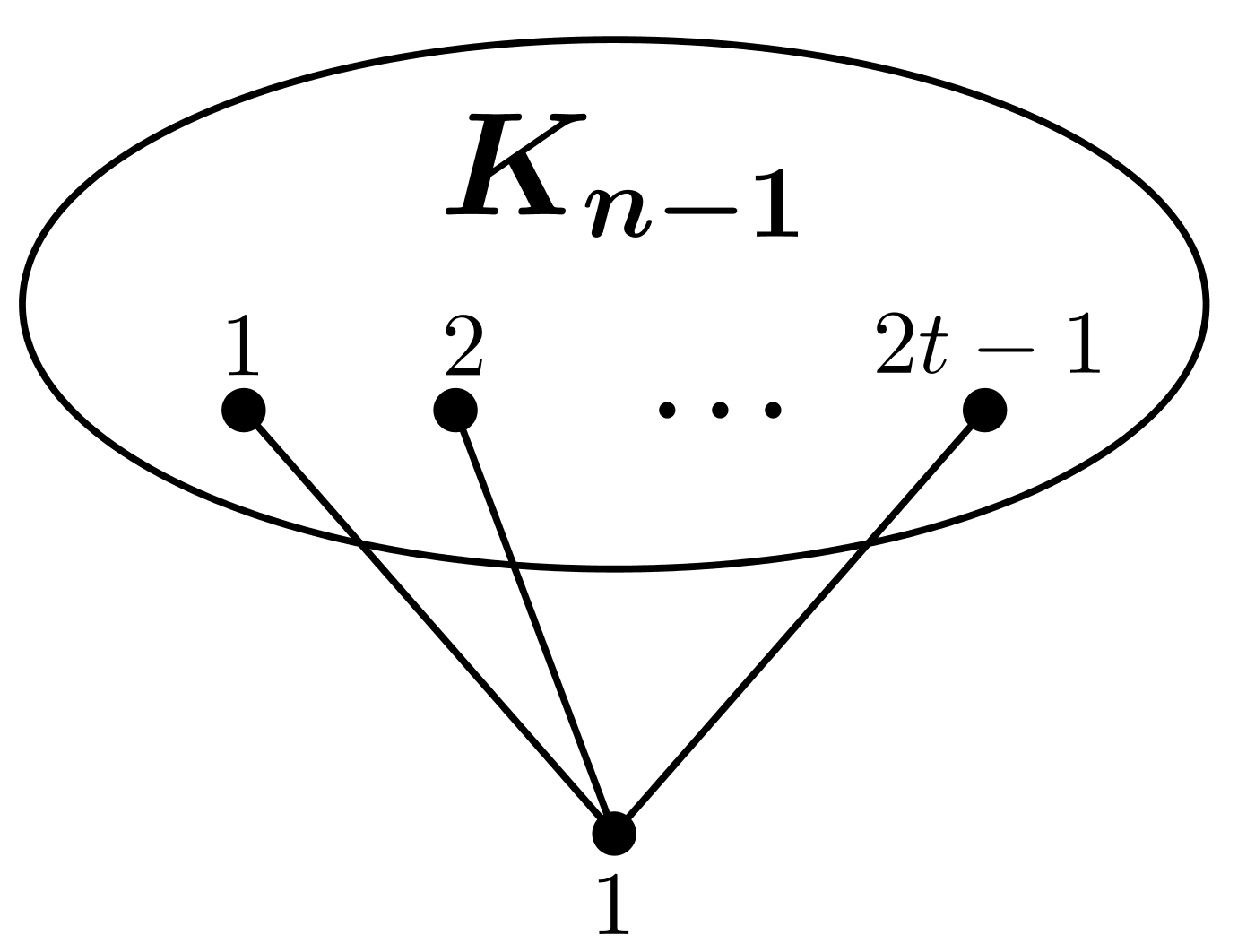}\\
\caption{Graph $K_{2t-1}\vee (K_{n-2t}+K_{1}).$
}\label{fig2}
\end{figure}

Recall that $G''=K_{tc-1}\vee(K_{n-(t+1)c+2}+(c-1)K_1).$ The distance matrix $D(G'')$ of $G''$ is
\[
\bordermatrix{
                  & c-1    & n-(t+1)c+2 & tc-1 \cr
\hfill c-1        & 2(J-I) &  2J        & J    \cr
\hfill n-(t+1)c+2 & 2J     & J-I        & J    \cr
\hfill tc-1       & J      & J          & J-I
}.
\]
By direct calculation, we have
\begin{eqnarray*}
W(G'')&=&\sum_{i<j}d_{ij}(G'')\\
&=&\frac{2[1+(c-2)](c-2)}{2}+2(c-1)[n-(t+1)c+2]+(c-1)(tc-1)\\
&&+\frac{[1+(n-c)](n-c)}{2}\\
&=&-\frac{2t+1}{2}c^2+\frac{2n+2t+3}{2}c+\frac{1}{2}n^2-\frac{3}{2}n-1.
\end{eqnarray*}
By Lemma \ref{le3}, we have
\begin{equation}\label{eq13}
\lambda_1(D(G''))\geq\frac{2W(G'')}{n}=\frac{-(2t+1)c^2+(2n+2t+3)c+n^2-3n-2}{n}.
\end{equation}
Define $\phi(c)=-(2t+1)c^2+(2n+2t+3)c+n^2-3n-2.$ It is easy to see that $n\geq (t+1)c-1$. Since $n\geq 4t^2+10t$ and $t\geq1,$ we obtain that
\begin{eqnarray*}
\phi(\frac{n+1}{t+1})-\phi(3)&=&\frac{n^2-(4t^2+9t+3)n+12t^3+26t^2+15t+2}{(t+1)^2}\\
&=&\frac{[n-(3t+2)][n-(4t^2+6t+1)]}{(t+1)^2}\\
&>&0.
\end{eqnarray*}
This implies that $\mathrm{min}_{3\leq c\leq\frac{n+1}{t+1}}\phi(c)=\phi(3)$. According to $n\geq 4t^2+10t, t\geq 1$ and (\ref{eq13}), we deduce that
\begin{eqnarray*}
\lambda_1(D(G''))\geq\frac{\phi(3)}{n}&=&\frac{n^2+3n-12t-2}{n}\\
&=&(n+2)+1-\frac{12t+2}{n}\\
&\geq&(n+2)+\frac{2(2t+1)(t-1)}{4t^2+10t}\\
&\geq&n+2.
\end{eqnarray*}
Define $G^*=K_{2t-1}\vee(K_{n-2t}+K_1).$ Then its distance matrix $D(G^*)$ is
\[
\bordermatrix{
                  & 2t-1    & n-2t  & 1    \cr
\hfill 2t-1       & J-I     & J     & J    \cr
\hfill n-2t       & J       & J-I   & 2J    \cr
\hfill 1          & J       & 2J    & O
}.
\]
By simple calculation, we have
\begin{eqnarray*}
W^{(2)}(G^*)=\sum_{1\leq i<j\leq n}d_{ij}^2(G^*)&=& \frac{[1+(2t-2)](2t-2)}{2}+(2t-1)(n-2t+1)\\
&&+\frac{[1+(n-2t-1)](n-2t-1)}{2}+4(n-2t)\\
&=&\frac{1}{2}n^2+\frac{5}{2}n-6t.
\end{eqnarray*}
Note that $t\geq 1$ and $n\geq 4t^2+10t.$ By Lemma \ref{le7}, we have
\begin{eqnarray*}
\lambda_1(D(G^*))\leq\sqrt{\frac{2(n-1)W^{(2)}(G^*)}{n}}&=&\sqrt{\frac{-12(n-1)t+n^3+4n^2-5n}{n}}\\
&\leq&\sqrt{\frac{n^3+4n^2-17n+12}{n}}\\
&=&\sqrt{(n+2)^2+\frac{12}{n}-21}\\
&<&n+2.
\end{eqnarray*}
Therefore, $\lambda_1(D(G^*))<n+2\leq \lambda_1(D(G'')).$ Combining this with (\ref{eq11}) and (\ref{eq12}), we have
$$\lambda_1(D(G^*))<\lambda_1(D(G''))\leq\lambda_1(D(G'))\leq\lambda_1(D(G)),$$
a contradiction.

(ii) When $1/t$ is a positive number, we have $c\geq \frac{s}{t}+1.$ It is obvious that $G$ is a spanning subgraph of $\hat{G}=K_{s}\vee(K_{n_1}+K_{n_2}+\cdots+K_{n_{\frac{s}{t}+1}})$ for $n_1\geq n_2\geq\cdots\geq n_{{\frac{s}{t}+1}}\geq 1$ and $\sum_{i=1}^{{\frac{s}{t}+1}}n_i=n-s.$ According to Lemma \ref{le1}, we obtain that
\begin{equation}\label{eq14}
\lambda_1(D(\hat{G)})\leq \lambda_1(D(G)),
\end{equation}
with equality holding if and only if $G\cong \hat{G}$.
Let $\tilde{G}=K_{s}\vee(K_{n-s+\frac{s}{t}}+\frac{s}{t}K_1).$ By Lemma \ref{le4}, we have
\begin{equation}\label{eq15}
\lambda_1(D(\tilde{G}))\leq \lambda_1(D(\hat{G})),
\end{equation}
where equality holds if and only if $\hat{G}\cong \tilde{G}.$
Next we consider the following two cases depending on the value of $s\geq1$.

\vspace{1.5mm}
\noindent\textbf{Case 1.} $s=1$.
\vspace{1mm}

Then $\tilde{G}= K_{1} \vee (K_{n-1-\frac{1}{t}}+\frac{1}{t}K_{1})$. By (\ref{eq14}) and (\ref{eq15}), we conclude that
$$\lambda_1(D(K_{1} \vee (K_{n-1-\frac{1}{t}}+\frac{1}{t}K_{1})))\leq \lambda_1(D(G)),$$
with equality holding if and only if $G\cong K_{1} \vee (K_{n-1-\frac{1}{t}}+\frac{1}{t}K_{1})$. By the assumption $\lambda_1(D(G))\leq\lambda_1(D(K_{1} \vee (K_{n-1-\frac{1}{t}}+\frac{1}{t}K_{1}))),$ we have $\lambda_1(D(G))=\lambda_1(D(K_{1} \vee (K_{n-1-\frac{1}{t}}+\frac{1}{t}K_{1})))$, and hence $G\cong K_{1} \vee (K_{n-1-\frac{1}{t}}+\frac{1}{t}K_{1})$ (see Fig. \ref{fig3}). Take $S=V(K_{1}).$ Then $$\frac{|S|}{c(K_{1} \vee (K_{n-1-\frac{1}{t}}+\frac{1}{t}K_{1})-S)}=\frac{1}{1+\frac{1}{t}}<t,$$ and hence $\tau(K_{1} \vee (K_{n-1-\frac{1}{t}}+\frac{1}{t}K_{1}))<t.$ This implies that $K_{2t-1} \vee (K_{n-2t}+K_{1})$ is not $t$-tough. So $G\cong K_{2t-1} \vee (K_{n-2t}+K_{1}).$

\vspace{1.5mm}
\noindent\textbf{Case 2.} $s\geq 2.$
\vspace{1mm}

\begin{figure}
\centering
\includegraphics[width=0.35\textwidth]{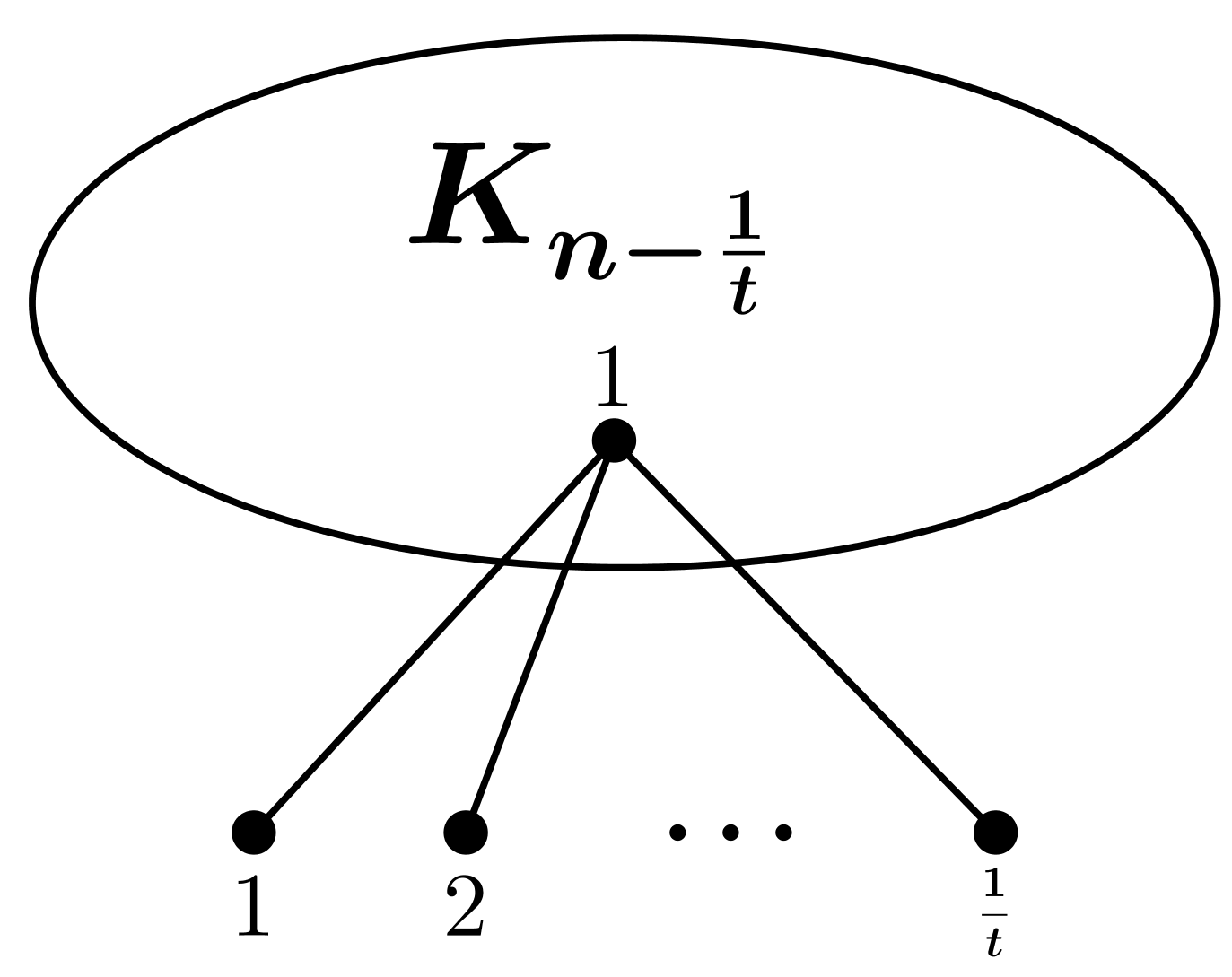}\\
\caption{Graph $K_{1} \vee (K_{n-\frac{1}{t}-1}+\frac{1}{t}K_{1}).$
}\label{fig3}
\end{figure}

Recall that $\tilde{G}=K_{s}\vee(K_{n-s+\frac{s}{t}}+\frac{s}{t}K_1).$ Notice that $D(\tilde{G})$ has the equitable quotient matrix
$$R_{t,s}=\left(
\begin{array}{ccc}
2(\frac{s}{t}-1) &2(n-s-\frac{s}{t}) & s\\
\frac{2s}{t}     & n-s-\frac{s}{t}-1 & s\\
\frac{s}{t}      & n-s-\frac{s}{t}   & s-1
\end{array}
\right).
$$
By simple calculation, the characteristic polynomial of $R_{t,s}$ is
\begin{eqnarray*}
P(R_{t,s},x)&=&x^3-\frac{tn+s-4t}{t}x^2-\frac{3t^2n+2stn-5t^2-3s^2t+st-2s^2}{t^2}x\\
&&+\frac{-2t^2n+s^2tn-2stn+2t^2-s^3t+3s^2t-s^3+2s^2}{t^2}.
\end{eqnarray*}
Let $G^{**}=K_1\vee(K_{n-1-\frac{1}{t}}+\frac{1}{t}K_{1}).$
Note that $D(G^{**})$ has the equitable quotient matrix $R_{t},$ which is obtained by taking $s=1$ in $R_{t,s}.$
Then
$P(R_t,x)-P(R_{t,s},x)=\frac{s-1}{t^2}\psi(x),$
where $$\psi(x)=tx^2+(2tn-3st-2t-2s-2)x-stn+tn+s^2t-2st-2t+s^2-s-1.$$
It is clear that $n\geq s+\frac{s}{t}+1.$ Then $2\leq s\leq \frac{n-1}{1+\frac{1}{t}},$ and hence the symmetry axis of $\psi(x)$ is $$x=-n+\frac{s}{t}+\frac{1}{t}+\frac{3}{2}s+1<n-\frac{1}{t}-1.$$
This implies that $\psi(x)$ is monotonically increasing with respect to $x\in [n-\frac{1}{t}-1, +\infty)$. Since $n\geq 2t+\frac{9}{2 t}+\frac{9}{2}$ and $2\leq s\leq \frac{n-1}{1+\frac{1}{t}},$ we have
\begin{eqnarray*}
\psi(x)&\geq& \psi(n-\frac{1}{t}-1)\\
&=& (t+1)s^2-(4tn+2n-t-\frac{2}{t}-4)s+3tn^2-5tn-6n+t+\frac{3}{t}+5\\
&\geq& (t+1)(\frac{n-1}{1+\frac{1}{t}})^2-(4tn+2n-t-\frac{2}{t}-4)(\frac{n-1}{1+\frac{1}{t}})+3tn^2-5tn-6n+t+\frac{3}{t}+5\\
&=& \frac{1}{t(t+1)}[t^2n^2-(2t^3+5t^2+4t)n+t^3+2t^2+6t+3]\\
&\geq& \frac{1}{t(t+1)}[t^2(2t+\frac{9}{2}t+\frac{9}{2})^2-(2t^3+5t^2+4t)(2t+\frac{9}{2}t+\frac{9}{2})+t^3+2t^2+6t+3]\\
&=& \frac{3t^2+24t+84}{4t(t+1)}\\
&>& 0.
\end{eqnarray*}
Combining this with $s\geq 2,$ we deduce that $P(R_t,x)-P(R_{t,s},x)=\frac{s-1}{t^2}\psi(x)>0$ for $x\in [n-\frac{1}{t}-1, +\infty).$ Since $K_{n-\frac{1}{t}}$ is a proper subgraph of $G^{**},$ we obtain that $\lambda_1(D(G^{**}))>\lambda_1(D(K_{n-\frac{1}{t}}))=n-\frac{1}{t}-1.$
Hence $\lambda_1(R_t)<\lambda_1(R_{t,s}).$
Combining this with Lemma \ref{le2}, we have $\lambda_1(D(G^{**}))<\lambda_1(D(\tilde{G})).$
By (\ref{eq14}) and (\ref{eq15}), then
$$\lambda_1(D(G^{**}))<\lambda_1(D(\tilde{G}))\leq\lambda_1(D(\hat{G}))\leq\lambda_1(D(G)),$$
which contradicts the assumption. \hspace*{\fill}$\Box$

\vspace{5mm}
\noindent
\vspace{3mm}





\begin{thebibliography}{99}
\setlength{\itemsep}{0pt}

\bibitem{Bondy2008} J.A. Bondy, U.S.R. Murty, Graph Theory, Grad. Texts in Math. vol. 244, Springer, New York, 2008.

\bibitem{Brouwer2011} A.E. Brouwer, W.H. Haemers, Spectra of graphs, Springer, Berlin, 2011.


\bibitem{Chvatal1973} V. Chv$\mathrm{\acute{a}}$tal, Tough graphs and hamiltonian circuits, \emph{Discrete Math.} \textbf{3} (1973) 215--228.

\bibitem{Fan2023} D.D. Fan, H.Q. Lin, H.L. Lu, Toughness, hamiltonicity and spectral radius in graphs, \emph{European J. Combin.} \textbf{110} (2023) 103701.

\bibitem{Godsil1993} C.D. Godsil, Algebraic Combinatorics, Chapman and Hall Mathematics Series, New York, 1993.

\bibitem{Godsil2001} C. Godsil, G.F. Royle, Algebraic graph theory, Springer-Verlag, New York, 2001.

\bibitem{Haemers1995} W.H. Haemers, Interlacing eigenvalues and graphs, \emph{Linear Algebra Appl.} \textbf{226} (1995) 593--616.

\bibitem{Horn1985} R.A. Horn, C.R. Johnson, Matrix analysis, Cambridge University Press, Cambridge, 1985.

\bibitem{Li2022} S.C. Li, S.J. Miao, Complete characterization of odd factors via the size, spectral radius or distance spectral radius of graphs, \emph{Bull. Korean Math. Soc.} \textbf{59} (2022) 1045--1067.

\bibitem{Li2023} S.C. Li, S.J. Miao, M.J. Zhang, On the size, spectral radius, distance spectral radius and fractional matchings in fraphs, \emph{Bull. Aust. Math. Soc.} (2023) 1--13.

\bibitem{Miao2023} S.J. Miao, S.C. Li, Characterizing star factors via the size, the spectral radius or the distance spectral radius of graphs, \emph{Discrete Appl. Math.} \textbf{326} (2023) 17--32.

\bibitem{Zhang2021} Y.K. Zhang, H.Q. Lin, Perfect matching and distance spectral radius in graphs and bipartite graphs, \emph{Discrete Appl. Math.} \textbf{304} (2021) 315--322.

\bibitem{Zhang2022} Y.K. Zhang, H.Q. Lin, Q.H. Liu, J.F. Zheng, Distance spectrum, 1-factor and vertex-disjoint cycles, \emph{Linear Algebra Appl.} \textbf{654} (2022) 10--27.

\bibitem{Zhang2023} Y.K. Zhang, E.R. van Dam, Matching extension and distance spectral radius, \emph{Linear Algebra Appl.} \textbf{674} (2023) 244--255.

\bibitem{Zhou2007} B. Zhou, N. Trinajsti$\mathrm{\acute{c}}$, On the largest eigenvalue of the distance matrix of a connected graph, \emph{Chem. Phys. Lett.} \textbf{447} (2007) 384--387.

\bibitem{Zhou2007(2)} B. Zhou, N. Trinajsti$\mathrm{\acute{c}}$, Further results on the largest eigenvalues of the distance matrix and some distance-based matrices of connected (molecular) graphs, \emph{Internet Electron J. Mol. Des.} \textbf{6} (2007) 375--384.

\bibitem{Zhou2010} B. Zhou, N. Trinajsti$\mathrm{\acute{c}}$, Mathematical properties of molecular descriptors based on distances, \emph{Croat.
    Chem. Acta} \textbf{83} (2010) 227--242.

\bibitem{Zhou2023} S.Z. Zhou, J.C. Wu, Spanning $k$-trees and distance spectral radius in graphs, arXiv:2304.08846.

\end{thebibliography}
\end{document}